\def\ge{\geqslant}
\def\le{\leqslant}
\def\a{\alpha}
\def\b{\beta}
\def\g{\gamma}
\def\G{\Gamma}
\def\d{\delta}
\def\e{\epsilon}
\def\o{\omega}
\def\s{\sigma}
\def\k{\kappa}
\def\l{\lambda}
\def\i{^{-1}}
\def\<{\langle}
\def\>{\rangle}
\def\bq{\mathbf q}
\newcommand{\bG}{\mathbf G}
\newcommand{\bM}{\mathbf M}
\newcommand{\BF}{\ensuremath{\mathbb {F}}\xspace}
\newcommand{{\BG}}{\ensuremath{\mathbb {G}}\xspace}
\newcommand{{\BK}}{\ensuremath{\mathbb {K}}\xspace}
\newcommand{\BQ}{\ensuremath{\mathbb {Q}}\xspace}
\newcommand{\BR}{\ensuremath{\mathbb {R}}\xspace}
\newcommand{\BS}{\ensuremath{\mathbb {S}}\xspace}
\newcommand{\BZ}{\ensuremath{\mathbb {Z}}\xspace}
\newcommand{\CK}{\ensuremath{\mathcal {K}}\xspace}
\newcommand{\CO}{\ensuremath{\mathcal {O}}\xspace}
\newcommand{\CR}{\ensuremath{\mathcal {R}}\xspace}
\newcommand{\GL}{\mathrm{GL}}
\newcommand{\irr}{\mathrm{irr}}
\newcommand{\leng}{\mathrm{length}}
\let\Im\relax
\DeclareMathOperator{\Im}{Im}
\def\leng{{\rm length}}
\def\bG{\mathbf{G}}
\def\bB{\mathbf{B}}
\def\bT{\mathbf{T}}
\def\bU{\mathbf{U}}
\def\bN{\mathbf{N}}
\def\bZ{\mathbf{Z}}
\def\ts{{\tilde s}}
\def\brk{{\breve k}}
\def\dw{{\dot w}}
\def\dc{{\dot c}}
\def\COk{{\CO_{\brk}}}
\def\pr{{\rm pr}}
\def\Cross{{\rm Cross}}
\def\codim{{\rm codim}}
\newtheorem{theorem}{Theorem}
\newtheorem{proposition}[theorem]{Proposition}
\newtheorem{lemma}[theorem]{Lemma}
\newtheorem{corollary}[theorem]{Corollary}
\theoremstyle{definition}
\newtheorem{definition}[theorem]{Definition}
\newtheorem{remark}[theorem]{Remark}
\numberwithin{equation}{section}
\numberwithin{theorem}{section}
\renewcommand{\to}{%
   \ifbool{@display}{\longrightarrow}{\rightarrow}%
   }
\let\shortmapsto\mapsto
\renewcommand{\mapsto}{%
   \ifbool{@display}{\longmapsto}{\shortmapsto}%
   }
\newlength{\olen}
\newlength{\ulen}
\newlength{\xlen}
\newcommand{\xra}[2][]{%
   \ifbool{@display}%
      {\settowidth{\olen}{$\overset{#2}{\longrightarrow}$}%
       \settowidth{\ulen}{$\underset{#1}{\longrightarrow}$}%
       \settowidth{\xlen}{$\xrightarrow[#1]{#2}$}%
       \ifdimgreater{\olen}{\xlen}%
          {\underset{#1}{\overset{#2}{\longrightarrow}}}%
          {\ifdimgreater{\ulen}{\xlen}%
             {\underset{#1}{\overset{#2}{\longrightarrow}}}
             {\xrightarrow[#1]{#2}}}}%
      {\xrightarrow[#1]{#2}}
   }
\newcommand{\xyra}[2][]{%
   \settowidth{\xlen}{$\xrightarrow[#1]{#2}$}%
   \ifbool{@display}%
      {\settowidth{\olen}{$\overset{#2}{\longrightarrow}$}%
       \settowidth{\ulen}{$\underset{#1}{\longrightarrow}$}%
       \ifdimgreater{\olen}{\xlen}%
          {\mathrel{\xymatrix@M=.12ex@C=3.2ex{\ar[r]^-{#2}_-{#1} &}}}%
          {\ifdimgreater{\ulen}{\xlen}%
             {\mathrel{\xymatrix@M=.12ex@C=3.2ex{\ar[r]^-{#2}_-{#1} &}}}
             {\mathrel{\xymatrix@M=.12ex@C=\the\xlen{\ar[r]^-{#2}_-{#1} &}}}}}%
      {\mathrel{\xymatrix@M=.12ex@C=\the\xlen{\ar[r]^-{#2}_-{#1} &}}}%
   }
\newcommand{\xla}[2][]{%
   \ifbool{@display}%
      {\settowidth{\olen}{$\overset{#2}{\longleftarrow}$}%
       \settowidth{\ulen}{$\underset{#1}{\longleftarrow}$}%
       \settowidth{\xlen}{$\xleftarrow[#1]{#2}$}%
       \ifdimgreater{\olen}{\xlen}%
          {\underset{#1}{\overset{#2}{\longleftarrow}}}%
          {\ifdimgreater{\ulen}{\xlen}%
             {\underset{#1}{\overset{#2}{\longleftarrow}}}
             {\xleftarrow[#1]{#2}}}}%
      {\xleftarrow[#1]{#2}}
   }
\newcommand{\isoarrow}{%
   \ifbool{@display}{\overset{\sim}{\longrightarrow}}{\xrightarrow\sim}%
   }
\begin{document}
\title{Steinberg's cross-section of Newton strata}
\author{Sian Nie}
\address{Academy of Mathematics and Systems Science, Chinese Academy of Sciences, Beijing 100190, China}

\address{ School of Mathematical Sciences, University of Chinese Academy of Sciences, Chinese Academy of Sciences, Beijing 100049, China}
\email{niesian@amss.ac.cn}

\keywords{Steinberg's cross-section; Coxeter element; Frobenius-twisted conjugacy class; affine Deligne-Lusztig varieties}
\subjclass[2020]{20G25}

\begin{abstract}
In this note, we introduce a natural analogue of Steinberg's cross-section in the loop group of a reductive group $\bG$. We show this loop Steinberg's cross-section provides a simple geometric model for the poset $B(\bG)$ of Frobenius-twisted conjugacy classes (referred to as Newton strata) of the loop group. As an application, we confirms a conjecture by Ivanov on decomposing loop Delgine-Lusztig varieties of Coxeter type. This geometric model also leads to new and direct proofs of several classical results, including the converse to Mazur's inequality, Chai's length formula on $B(\bG)$, and a key combinatorial identity in the study affine Deligne-Lusztig varieties with finite Coxeter parts.
\end{abstract}

\maketitle

\section*{Introduction}

\subsection{Steinberg's cross-section}
Let $G$ be a reductive group over an algebraically closed field. Recall that an element of $G$ is called regular if the dimension of its centralizer in $G$ equals the rank $r$ of $G$. In his seminal work \cite{S}, Steinberg constructed a cross-section $N$ to regular (conjugacy) classes of $G$, and proved the following remarkable properties:

(1) $N$ consists of regular elements;

(2) each regular class of $G$ intersects $N$ tranversely;

(3) if $G$ is semisimple and simply connected, each regular class intersects $N$ at a single point.

By construction, the cross-section $N$ is an affine $r$-space attached to a minimal Coxeter element in the Weyl group of $G$. In recent works \cite{HL}, \cite{L11}, \cite{Se}, He, Lusztig, and Sevostyanov discovered that for a wide class of Weyl group elements, analogous construction also produces transversal slices to conjugacy classes of $G$.

\subsection{The main result} In this note we focus on a loop group version of Steinberg's result mentioned above. On the one hand, the Steinberg's cross-section has a straightforward analogue in a loop group. On the other hand, a loop group is stratified by its Frobenius-twisted conjugacy classes. So it is natural to ask how the loop Steinberg's cross-section intersects various Frobenius-twisted conjugacy classes. The answer turns out to be simple and purely combinatorial. This suggests that the loop Steinberg's cross-section serves as a ``transversal slice'' to Frobenius-twisted conjugacy classes.

To formulate the main results, we introduce more notations. Let $\bG$ be an unramified reductive group over a non-archimedean local field $k$. Fix a uniformizer $\varpi \in k$. Let $\brk$ be the completion of a maximal unramified extension of $k$. Denote by $\s$ the Frobenius automorphism $\brk / k$ and the induced automorphism of the loop group $\bG(\brk)$. We fix two opposite Borel subgroups $\bB$ and $\bB^-$, and let $\bT = \bB \cap \bB^-$ be a maximal torus. Let $\bN$ be the normalizer of $\bT$ in $\bG$. The Weyl group of $\bG$ is defined by \[W = \bN(\COk) / \bT(\COk) \cong \bN(\brk) / \bT(\brk),\] where $\COk$ is the integer ring of $\brk$. For $w \in W$ we set \[\bU_w = \bU \cap {}^{w\i}(\bU^-),\] where $\bU, \bU^-$ are the unipotent radicals of $\bB, \bB^-$ respectively.

Fix a representative set $\{\a_1, \dots, \a_r\}$ for the $\s$-orbits of simple roots appearing in $\bB$. Denote by $s_{\a_i} \in W$ the simple reflection corresponding to $\a_i$. Set $c = s_{\a_1} \cdots s_{\a_r} \in W$ which is a $\s$-Coxeter element. Let $\dc \in \bN(\brk)$ be lift of $c$. Then there exist a unique cocharacter $\mu \in X_*(\bT)$ and some lifts $\ts_{\a_i} \in \bN(\COk)$ of $s_{\a_i}$ such that $\dc = \varpi^\mu \ts_{\a_1} \cdots \ts_{\a_r}$, where $\varpi^\mu = \mu(\varpi) \in \bG(k)$. Following \cite[Theorem 1.4]{S}, we define the loop Steinberg's cross-section attached to $\dc$ by \[\dc \bU_c(\brk) = \varpi^\mu \ts_{\a_1} \bU_{\a_1}(\brk) \cdots \ts_{\a_r} \bU_{\a_r}(\brk),\] where $\bU_{\a_i}$ denotes the root subgroup of $\a_i$.

For $b \in \bG(\brk)$ the corresponding $\s$-twisted conjugacy class is defined by \[[b] = \{g\i b \s(g); g \in \bG(\brk)\}.\] Thanks to Kottwitz \cite{K}, $[b]$ is determined by two invariants: the Kottwitz point $\k(b) \in \pi_1(\bG)_\s$ and the dominant Newton point $\nu(b) \in X_*(\bT)_\BQ$, see \S\ref{setup-subsec}. We denote by $B(\bG)$ the set of $\s$-twisted conjugacy classes of $\bG(\brk)$.

The main result of this note is a combinatorial description of the intersections $[b] \cap \dc \bU_c(\brk)$ for $[b] \in B(\bG)$.
\begin{theorem} \label{main}
    Let $b$ and $\dc = \varpi^\mu \ts_{\a_1} \cdots \ts_{\a_r}$ be as above. Then we have $[b] \cap \dc\bU_c(\brk) \neq \emptyset$ if and only if $\k(\dc) = \k(b)$. In this case, \[[b] \cap \dc\bU_c(\brk) = \varpi^\mu \ts_{\a_1} H_{\mu, b, \a_1} \cdots \ts_r H_{\mu, b, \a_r},\] where \[H_{\mu, b, \a_i} = \begin{cases} \bU_{\a_i}(\varpi^{\<\mu-\nu(b),  \o_i\>} \CO_\brk^\times); &\text{ if } \<\nu(b), \a_i\> > 0 \\ \bU_{\a_i} (\varpi^{\lceil \<\mu-\nu(b), \o_i\> \rceil} \COk), &\text{ otherwise.} \end{cases}\] Here $\o_i$ is the sum of fundamental weights corresponding simple roots in the $\s$-orbits of $\a_i$, and $\< , \>: X_*(\bT)_\BQ \times X^*(\bT)_\BQ \to \BQ$ is the natural paring.
\end{theorem}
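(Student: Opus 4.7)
The necessity of the Kottwitz matching $\k(b) = \k(\dc)$ is immediate: the Kottwitz map $\k : \bG(\brk) \to \pi_1(\bG)_\s$ is a group homomorphism that vanishes on the unipotent radical $\bU(\brk)$, so $\k(\dc u) = \k(\dc)$ for every $u \in \bU_c(\brk)$. Thus $[b] \cap \dc \bU_c(\brk) \ne \emptyset$ forces $\k(b) = \k(\dc)$.

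For the explicit description, I would fix root-subgroup parametrizations $x_{\a_i} : \brk \cong \bU_{\a_i}(\brk)$ and write each element of the cross-section as $\dc u = \varpi^\mu \ts_{\a_1} x_{\a_1}(y_1) \cdots \ts_{\a_r} x_{\a_r}(y_r)$ with $(y_1,\dots,y_r) \in \brk^r$. Setting $n_i = \val(y_i)$, the theorem reduces to the following \emph{Newton formula}: $\nu(\dc u)$ equals the unique dominant $\nu \in X_*(\bT)_\BQ$ with Kottwitz image $\k(\dc)$ satisfying $\<\mu - \nu, \o_i\> \le n_i$ for all $i$, with equality whenever $\<\nu, \a_i\> > 0$. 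Comparing these constraints with the definition of $H_{\mu,b,\a_i}$, the product form in the theorem falls out by direct inspection.

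To establish the Newton formula, I would proceed by induction on $r$. The key mechanism is controlled $\s$-conjugation: conjugating $\dc u$ by a well-chosen element of a root subgroup transforms the coordinates $y_i$ in a combinatorially transparent way, because $\Ad(\dc) \circ \s$ cyclically permutes the family $\{\bU_{\a_i}\}_{i=1}^{r}$ (twisted by $\mu$) along the $\s$-Coxeter cycle. This produces the dichotomy of the theorem: along roots $\a_i$ with $\<\nu, \a_i\> > 0$ the permutation is expanding on valuations, so $n_i$ is rigidly pinned to the threshold $\<\mu - \nu, \o_i\>$, whereas along roots with $\<\nu, \a_i\> \le 0$ it is contracting, and any factor with valuation above threshold can be $\s$-conjugated away. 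The base case $r = 1$ is a direct $\SL_2$- or $\GL_2$-type calculation inside the Levi subgroup of the $\s$-orbit of $\a_1$.

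The main obstacle is bookkeeping through the induction: normalizing one coordinate $y_i$ typically perturbs the others, so one must order the reductions carefully --- processing the leftmost factor first inside the Levi of the $\s$-orbit of $\a_1$, then passing to a rank-$(r{-}1)$ cross-section problem --- and control the perturbation of the later factors using commutator relations between root subgroups supported on distant $\s$-orbits. Sufficiency of the Kottwitz condition then emerges simultaneously, since for any valid target Newton point $\nu(b)$ one can explicitly exhibit valuations $n_i$ that realize the constraints in the Newton formula.
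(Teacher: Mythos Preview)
Your reformulation as a ``Newton formula'' is fine and the necessity of $\k(b)=\k(\dc)$ is indeed trivial, but the core mechanism you propose does not work. The claim that $\Ad(\dc)\circ\s$ ``cyclically permutes the family $\{\bU_{\a_i}\}_{i=1}^r$'' is false: the relevant root subgroups in $\bU_c$ are the $\bU_{\s(\b_i)}$ with $\b_i=\s^{-1}s_{\a_r}\cdots s_{\a_{i+1}}(\a_i)$, and one computes $c\s(\b_i)=-\a_i$, so $\Ad(\dc\s)$ sends each $\bU_{\b_i}$ into a \emph{negative} root subgroup, not to another factor of the cross-section. There is no cyclic permutation structure to exploit, and hence no simple ``expanding/contracting'' dichotomy on the coordinates $y_i$ under iterated $\s$-conjugation by root-subgroup elements. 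You correctly identify the bookkeeping of perturbations as the main obstacle, but then offer no device to control it; commutator relations between root subgroups on distant $\s$-orbits are not enough, because the conjugation immediately leaves $\bU_c$ and spreads over all of $\bU$.

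The paper's argument does share the induction on $r$ via a Levi, but the surrounding architecture is quite different. First it reduces to the case $b=\varpi^\chi$ a translation (by passing to a totally ramified extension $\brk_N$ to make the auxiliary vector $\l$ integral). Then, using Lusztig's description of $\bU^-(\brk)\cap \bB c\bB$ together with Steinberg's isomorphism $\bU\times \dc\bT(\COk)\bU_c \cong \bU\dc\bT(\COk)\bU$, it rewrites $[b]\cap\dc\bT(\COk)\bU_c(\brk)$ as the projection of an explicit set $\varpi^\eta\bT(\COk)K_{\mu-\eta}$ under the inverse of that isomorphism. The projection is then computed by an iteration map $\Xi_{\dc\s}$ on $\bU\times\bU_{c\s}$, shown to terminate by Malten's result that $\Cross_{c\s}^d(\Phi^+)=\emptyset$ for $d\gg 0$; the perturbations are tracked via a filtered polynomial $\brk$-algebra $\CR$ with subspaces $\CR^{\ge\e}$, $\CR^{>\e}$ recording both valuation and degree in the coordinate variables. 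It is this filtration, not root-subgroup commutators alone, that makes the induction close.
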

We remark that $\<\mu-\nu(b), \o_i\> \in \BZ$ if $\<\nu(b), \a_i\> > 0$, see Definition \ref{def}.

\begin{remark}
    Note that $[b]$ is an admissible set in the sense of \cite{HV} and \cite{He16}. In particular, it makes sense to define topological invariants of $[b]$, such as the closure of $[b]$ in $\bG(\brk)$, the irreducible/connected components of $[b]$, and the relative dimension of $[b]$.

    By construction, the loop Steinberg's cross-section $\dc\bU_c(\brk)$ can be viewed as an infinite dimensional affine space. It follows from Theorem \ref{main} that the intersections $[b] \cap \dc\bU_c(\brk)$ are admissible subsets of $\dc\bU_c(\brk)$. So it also makes sense to study their topological properties.
\end{remark}

Let $\leq$ denote the usual dominance order on $B(\bG)$, see \S\ref{subsec:closure}. Our second result shows that the following Newton decomposition \[\dc\bU_c(\brk) = \bigsqcup_{[b] \in B(\bG)} [b] \cap \dc\bU_c(\brk)\] is a stratification of $\dc\bU_c(\brk)$, whose closure relation is given by $\leq$.
\begin{theorem} \label{closure}
    Let $\mu \in X_*(\bT)$ and $[b] \in B(\bG)$ such that $\k(\varpi^\mu) = \k(b)$. Then the closure of $[b] \cap \dc\bU_c(\brk)$ in $\dc\bU_c(\brk)$ is \[ \overline{[b] \cap \dc\bU_c(\brk)} = \bigsqcup_{[b'] \leq [b]} [b'] \cap \dc\bU_c(\brk).\] Moreover, $\overline{[b] \cap \dc\bU_c(\brk)} - ([b] \cap \dc\bU_c(\brk))$ is pure of codimension one in $\overline{[b] \cap \dc\bU_c(\brk)}$.
\end{theorem}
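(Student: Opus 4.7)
The plan is to reduce the statement to the combinatorial description supplied by Theorem~\ref{main}, worked out in the affine coordinates on $\dc\bU_c(\brk)$. The map $(x_1,\ldots,x_r)\mapsto\varpi^\mu\ts_{\a_1}\bU_{\a_1}(x_1)\cdots\ts_{\a_r}\bU_{\a_r}(x_r)$ identifies $\brk^r$ with $\dc\bU_c(\brk)$ as an ind-scheme over the residue field. Writing $m_i(b)=\<\mu-\nu(b),\o_i\>$ and
\[
f_i(b)=\begin{cases} m_i(b),&\<\nu(b),\a_i\>>0,\\ \lceil m_i(b)\rceil,&\text{otherwise,}\end{cases}
\]
Theorem~\ref{main} realises $[b]\cap\dc\bU_c(\brk)$ as the product of the loci $\{\val(x_i)=f_i(b)\}$ (first case) and $\{\val(x_i)\geq f_i(b)\}$ (second case). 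Its Zariski closure in $\dc\bU_c(\brk)$ is therefore visibly
\[
C(b):=\prod_{i=1}^r\{x_i\in\brk:\val(x_i)\geq f_i(b)\}.
\]

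To prove $\overline{[b]\cap\dc\bU_c(\brk)}=C(b)$, the nontrivial inclusion $\supseteq$ is handled by a one-parameter deformation: given $(x_1,\ldots,x_r)\in C(b)$, set $y_i(t)=x_i+t\varpi^{f_i(b)}$ when $\<\nu(b),\a_i\>>0$ and $y_i(t)=x_i$ otherwise, for a parameter $t\in\BA^1_\kk$. For all but finitely many $t\neq 0$ no cancellation of the leading $\varpi^{f_i(b)}$-term occurs, so the tuple $(y_1(t),\ldots,y_r(t))$ meets the valuation equalities/inequalities defining $[b]\cap\dc\bU_c(\brk)$ and hence belongs to it by Theorem~\ref{main}. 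Specialising $t\to 0$ deposits $(x_1,\ldots,x_r)$ in the closure, as required.

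The identification $C(b)=\bigsqcup_{[b']\leq[b]}[b']\cap\dc\bU_c(\brk)$ then proceeds in two steps. For $\supseteq$, the dominance $[b']\leq[b]$ (with $\k(b')=\k(b)$) is equivalent to $m_i(b')\geq m_i(b)$ for all $i$, since the $\o_i$ span the $\s$-invariant dominant weights; a short case check using integrality upgrades this to $f_i(b')\geq f_i(b)$, so every point of $[b']\cap\dc\bU_c(\brk)$ already satisfies $\val(x_i)\geq f_i(b')\geq f_i(b)$ and hence lies in $C(b)$. For $\subseteq$, I would invoke the classical fact that $\overline{[b]}=\bigsqcup_{[b']\leq[b]}[b']$ in $\bG(\brk)$; combined with $\overline{[b]\cap\dc\bU_c(\brk)}=C(b)$ this gives $C(b)\subseteq\overline{[b]}\cap\dc\bU_c(\brk)=\bigsqcup_{[b']\leq[b]}[b']\cap\dc\bU_c(\brk)$.

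Purity of the boundary then falls out of the coordinate description: $C(b)\setminus([b]\cap\dc\bU_c(\brk))$ is the finite union, over those $i$ with $\<\nu(b),\a_i\>>0$, of the closed subschemes $\{\val(x_i)\geq f_i(b)+1\}\times\prod_{j\neq i}\{\val(x_j)\geq f_j(b)\}$, and each of these is irreducible of codimension one in $C(b)$, being cut out by the vanishing of the single coordinate giving the $\varpi^{f_i(b)}$-coefficient of $x_i$. The most delicate step I foresee is the inclusion $\subseteq$ in the previous paragraph: the valuation bound $\val(x_i)\geq f_i(b)$ does not by itself force the ambient class $[b']$ to satisfy $[b']\leq[b]$ in coordinates where $\<\nu(b'),\a_i\>=0$, since the defining constraint on $[b']$ there is only a lower bound. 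Invoking the loop-group closure theorem short-circuits this entirely; a self-contained alternative would require a more careful combinatorial comparison of Newton vectors at such walls.
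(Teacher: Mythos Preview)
Your overall architecture matches the paper's: compute the closure $C(b)$ from the explicit description in Theorem~\ref{main}, then compare $C(b)$ with the strata for $[b']\leq[b]$. Your closure computation, the $\supseteq$ inclusion, and the purity statement are all handled essentially as in the paper (the paper simply writes $\lceil m_i(b)\rceil$ throughout, which agrees with your $f_i(b)$ since $m_i(b)\in\BZ$ when $i\in J_{\nu(b)}$).

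The genuine gap is the $\subseteq$ inclusion. You invoke the ``classical fact'' $\overline{[b]}=\bigsqcup_{[b']\leq[b]}[b']$ in $\bG(\brk)$, but in this paper that is Theorem~\ref{order}, which is explicitly presented as a \emph{consequence} of Theorem~\ref{closure}. So your argument is circular here, and you correctly sense this when you flag the step as delicate. The paper replaces this appeal by a short, self-contained combinatorial argument: if $[b'']$ meets $C(b)$, Theorem~\ref{main} forces $\<\mu-\nu(b),\o_i\>\leq\<\mu-\nu(b''),\o_i\>$ for every $i\in J_{\nu(b'')}$. Write $\nu(b)-\nu(b'')=v_1-v_2$ with $v_1\in\sum_{\a\in\Pi_1}\BR_{\ge0}\a^\vee$ and $v_2\in\sum_{\a\in\Pi_2}\BR_{>0}\a^\vee$ for a $\s$-stable partition $\Pi=\Pi_1\sqcup\Pi_2$. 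The inequality above shows $\<\nu(b''),\a\>=0$ for $\a\in\Pi_2$; pairing with the half-sum $\rho_2$ of positive roots supported on $\Pi_2$ then gives $0\leq\<v_2,\rho_2\>=\<v_1,\rho_2\>-\<\nu(b),\rho_2\>\leq 0$, hence $v_2=0$ and $[b'']\leq[b]$. This is exactly the ``more careful combinatorial comparison at walls'' you anticipated needing, and it is what makes the paper's proof independent of He's result rather than a restatement of it.
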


The Newton stratification of $\dc \bU_c(\brk)$ provides a simple geometric model of the poset $(\bG, \leq)$. This lead to new interpretations of several classical results. Here are two immediate examples.

\

The first one is a geometric characterization of the dominance order on $B(\bG)$ proved by He \cite{He16}.
\begin{theorem} \label{order}
    Let $[b], [b'] \in B(\bG)$. Then $[b'] \subseteq \overline{[b]}$ if and only if $[b'] \leq [b]$. Here $\overline{[b]}$ denotes the closure of $[b]$ in $\bG(\brk)$.
\end{theorem}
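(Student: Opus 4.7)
The plan is to deduce Theorem \ref{order} directly from Theorems \ref{main} and \ref{closure}, using $\dot c \bU_c(\brk)$ as a ``transversal slice'' that intersects every $\sigma$-twisted conjugacy class with the correct Kottwitz invariant.

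For the main direction, $[b'] \leq [b] \Rightarrow [b'] \subseteq \overline{[b]}$ (originally due to He \cite{He16}), first observe that the dominance order forces $\k(b') = \k(b)$. Choose any cocharacter $\mu \in X_*(\bT)$ with $\k(\varpi^\mu) = \k(b) = \k(b')$, so that $\k(\dot c) = \k(b) = \k(b')$. Theorem \ref{main} then guarantees that both $[b] \cap \dot c \bU_c(\brk)$ and $[b'] \cap \dot c \bU_c(\brk)$ are nonempty, and Theorem \ref{closure} yields
\[
[b'] \cap \dot c \bU_c(\brk) \subseteq \overline{[b] \cap \dot c \bU_c(\brk)} \subseteq \overline{[b]},
\]
where the second inclusion uses only that closure is monotonic with respect to inclusion, applied to $[b] \cap \dot c \bU_c(\brk) \subseteq [b]$ inside $\bG(\brk)$. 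Since $[b]$ is a $\sigma$-twisted conjugation orbit and each map $h \mapsto g^{-1} h \sigma(g)$ is continuous, the closure $\overline{[b]}$ is stable under $\sigma$-twisted conjugation. As every element of $[b']$ is $\sigma$-conjugate to some point of $[b'] \cap \dot c \bU_c(\brk) \subseteq \overline{[b]}$, it follows that $[b'] \subseteq \overline{[b]}$.

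For the converse, $[b'] \subseteq \overline{[b]} \Rightarrow [b'] \leq [b]$, I would appeal to the classical upper semicontinuity of the Newton stratification (Grothendieck's specialization theorem for $F$-isocrystals, extended to general reductive $\bG$ by Rapoport and Richartz). The Kottwitz invariant is locally constant on $\bG(\brk)$, giving $\k(b') = \k(b)$, and specialization yields $\nu(b') \leq \nu(b)$; together these give $[b'] \leq [b]$. The main subtlety throughout is the distinction between closures taken inside the ind-subscheme $\dot c \bU_c(\brk)$ versus the ambient $\bG(\brk)$; fortunately, the forward direction of our argument requires only the trivial inclusion ``subspace closure $\subseteq$ ambient closure'', so no additional topological input is needed beyond the combinatorial content of Theorems \ref{main} and \ref{closure}.
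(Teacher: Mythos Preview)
Your proposal is correct and follows essentially the same route as the paper: the implication $(\Rightarrow)$ is attributed to Rapoport--Richartz \cite{RR}, and for $(\Leftarrow)$ the paper simply says it is ``a direct consequence of Theorem~\ref{closure}'', which is exactly the argument you spell out (pick $\mu$ with $\k(\varpi^\mu)=\k(b)$, apply Theorems~\ref{main} and~\ref{closure} to get a point of $[b']$ inside $\overline{[b]}$, then use $\s$-conjugation invariance of $\overline{[b]}$). Your explicit remarks on the subspace-versus-ambient closure and on the $\s$-stability of $\overline{[b]}$ just make precise what the paper leaves implicit.
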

The implication ($\Rightarrow$) follows from a general result of Rapoport and Richartz \cite{RR}. The implication ($\Leftarrow$) is proved by He \cite{He16} using a deep purity result due to Viehmann \cite{Vie} and Hamacher \cite{Ha}. Now the implication ($\Leftarrow$) is a direct consequence of Theorem \ref{closure}.

\

The second one is the Mazur's inequality criterion on the emptiness/non-emptiness of $[b] \cap \CK \varpi^\mu \CK$, where $\CK=\bG(\COk) \subseteq \bG(\brk)$.
\begin{theorem} \label{Mazur}
    Let $\mu \in X_*(\bT)$ and $[b] \in B(\bG)$. Then $[b] \cap \CK \varpi^\mu \CK \neq \emptyset$ if and only if $[b] \in B(\bG, \mu)$, namely, $[b] \leq [\varpi^\mu]$.
\end{theorem}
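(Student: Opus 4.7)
The plan is to reduce Theorem \ref{Mazur} to a direct application of Theorem \ref{main}, exploiting the observation that once $[b] \le [\varpi^\mu]$, every factor $H_{\mu, b, \a_i}$ in the cross-section description collapses into the integral root subgroup $\bU_{\a_i}(\CO_\brk)$.

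By the Cartan decomposition, $\CK\varpi^\mu\CK$ and the class $[\varpi^\mu]$ depend only on the $W$-orbit of $\mu$, so we may assume $\mu$ is dominant. Pick any lifts $\ts_{\a_i} \in \bN(\CO_\brk)$ and set $\dc = \varpi^\mu \ts_{\a_1}\cdots\ts_{\a_r}$; since each $\ts_{\a_i} \in \CK$ has trivial Kottwitz invariant, $\k(\dc) = \k(\varpi^\mu)$.

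For the nontrivial implication $(\Leftarrow)$, the assumption $[b] \le [\varpi^\mu]$ gives $\k(b) = \k(\varpi^\mu) = \k(\dc)$, so Theorem \ref{main} yields $[b] \cap \dc\bU_c(\brk) \ne \emptyset$. The same assumption provides $\nu(b) \le \nu(\varpi^\mu) = \bar\mu$ in the dominance order, where $\bar\mu$ denotes the dominant $\s$-average of $\mu$. Each $\o_i$ is $\s$-invariant, so averaging gives $\<\bar\mu, \o_i\> = \<\mu, \o_i\>$; combining this with the dominance of $\o_i$ and the inequality $\nu(b) \le \bar\mu$ yields $\<\nu(b), \o_i\> \le \<\bar\mu, \o_i\> = \<\mu, \o_i\>$. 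Hence $\<\mu - \nu(b), \o_i\> \ge 0$ for every $i$, and both branches in the definition of $H_{\mu, b, \a_i}$ place this set inside $\bU_{\a_i}(\CO_\brk) \subseteq \CK$. Consequently
\[
[b] \cap \dc\bU_c(\brk) = \varpi^\mu \ts_{\a_1}H_{\mu, b, \a_1}\cdots\ts_{\a_r}H_{\mu, b, \a_r} \subseteq \varpi^\mu \CK \subseteq \CK\varpi^\mu\CK,
\]
and the non-emptiness of the left-hand side forces $[b] \cap \CK\varpi^\mu\CK \ne \emptyset$.

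The remaining implication $(\Rightarrow)$ is the classical Mazur inequality, proved in this generality by Rapoport--Richartz \cite{RR}. The main subtlety of the plan is recognising that the dominance inequality $\nu(b) \le \bar\mu$, paired against the $\s$-invariant dominant weights $\o_i$, suffices to collapse every $H_{\mu, b, \a_i}$ into the integral root subgroup, even though $\bar\mu$ itself need not satisfy $\bar\mu \le \mu$ in the coroot dominance order; once this is observed the argument is a direct unpacking of Theorem \ref{main}.
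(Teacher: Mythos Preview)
Your proof is correct and follows essentially the same route as the paper: reduce to dominant $\mu$, use the $\s$-invariance of $\o_i$ to get $\<\mu-\nu(b),\o_i\> = \<\mu^\diamond-\nu(b),\o_i\> \ge 0$, and then invoke Theorem~\ref{main} to trap $[b]\cap\dc\bU_c(\brk)$ inside $\varpi^\mu\CK$, with the forward direction deferred to Rapoport--Richartz. The only difference is cosmetic (your $\bar\mu$ is the paper's $\mu^\diamond$, and you spell out a couple of steps the paper leaves implicit).
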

The implication ($\Rightarrow$) is again due to Rapoport and Richartz \cite{RR}. The implication ($\Leftarrow$) is a conjecure of Kottwitz and Rapoport, which is proved by Gashi \cite{Ga} in a purely combinatorial way. Now we give a new proof of the implication ($\Leftarrow$). First we can assume that $\mu$ is dominant. Then $\nu(\varpi^\mu)$ equals the $\s$-average $\mu^\diamond$ of $\mu$. By definition, for any $[b] \in B(\bG, \mu)$ we have $\k(b) = \k(\varpi^\mu)$ and $\<\mu-\nu(b), \o_i\> = \<\mu^\diamond -\nu(b), \o_i\> \ge 0$ for $1 \le i \le r$. Hence it follows from Theorem \ref{main} that \[ \emptyset \neq [b] \cap \dc\bU_c(\brk) \subseteq \varpi^\mu \ts_{\a_1} \bU_{\a_1}(\COk) \cdots \ts_{\a_r} \bU_{\a_r}(\COk) \subseteq \varpi^\mu \CK.\] So the implication ($\Leftarrow$) follows.

\subsection{Further consequences} We continue to discuss further consequences of the main results.

In \cite{Chai}, Chai deduced an explicit length formula for the poset $(B(\bG), \leq)$ in a purely combinatorial way. We will give a geometric proof of Chai's formula.
\begin{theorem} \label{poset}
    The poset $(B(\bG), \leq)$ is ranked, and the length between two elements $[b'] \leq [b] \in B(\bG)$ is given by \[\leng([b], [b']) = \sum_{i=1}^r \lceil \<\mu-\nu(b'), \o_{\CO_i}\>\rceil - \lceil \<\mu-\nu(b), \o_{\CO_i}\>\rceil,\] where $\mu \in X_*(\bT)$ is any cocharacter such that $\k(\varpi^\mu) = \k(b) =\k(b')$.
\end{theorem}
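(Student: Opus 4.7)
The plan is to reduce the length formula to a codimension-one identity for covering relations, then exploit the explicit product description of the Newton strata in $\dc\bU_c(\brk)$. Fix $\mu$ with $\k(\varpi^\mu)=\k(b)$, and set $n_i(b):=\lceil\<\mu-\nu(b),\o_i\>\rceil$ and $d([b],\mu):=\sum_{i=1}^r n_i(b)$. The strategy is to prove that $d([b'],\mu)-d([b],\mu)=1$ whenever $[b']$ is covered by $[b]$ in $(B(\bG),\leq)$. Once this is done, telescoping along any maximal chain from $[b']$ to $[b]$ shows simultaneously that the poset is ranked and that the length equals $d([b'],\mu)-d([b],\mu)$, which is the claimed formula.

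To establish the covering identity, I would first compute the closure of $[b]\cap\dc\bU_c(\brk)$ inside $\dc\bU_c(\brk)$ using Theorem \ref{main}. In each of the two cases defining $H_{\mu,b,\a_i}$, its closure inside $\bU_{\a_i}(\brk)$ is the ``disk'' $\bU_{\a_i}(\varpi^{n_i(b)}\COk)$: if $\<\nu(b),\a_i\>>0$, one uses that $\CO_\brk^\times$ is open dense in $\COk$ together with the paper's observation that $n_i(b)=\<\mu-\nu(b),\o_i\>\in\BZ$; otherwise $H_{\mu,b,\a_i}$ is itself already such a disk. Each factor being irreducible, the closure of the product equals the product of closures, giving
\[
\overline{[b]\cap\dc\bU_c(\brk)}=\varpi^\mu\prod_{i=1}^r\ts_{\a_i}\bU_{\a_i}(\varpi^{n_i(b)}\COk).
\]
For $[b']\leq[b]$, the dominance $\nu(b')\leq\nu(b)$ forces $n_i(b')\geq n_i(b)$ for each $i$, and the quotient $\bU_{\a_i}(\varpi^{n_i(b)}\COk)/\bU_{\a_i}(\varpi^{n_i(b')}\COk)$ is an affine space of dimension $n_i(b')-n_i(b)$. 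Summing over $i$ yields
\[
\codim\bigl(\overline{[b']\cap\dc\bU_c(\brk)},\ \overline{[b]\cap\dc\bU_c(\brk)}\bigr)=d([b'],\mu)-d([b],\mu).
\]

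To conclude, I would invoke Theorem \ref{closure}: the boundary of $[b]\cap\dc\bU_c(\brk)$ equals $\bigsqcup_{[b']<[b]}[b']\cap\dc\bU_c(\brk)$ and is pure of codimension one in $\overline{[b]\cap\dc\bU_c(\brk)}$. When $[b']$ is covered by $[b]$, no intermediate stratum $[b'']\cap\dc\bU_c(\brk)$ with $[b']<[b'']<[b]$ exists, so the irreducible closed subset $\overline{[b']\cap\dc\bU_c(\brk)}$ is an irreducible component of the boundary. By purity it has codimension exactly one in $\overline{[b]\cap\dc\bU_c(\brk)}$, and combined with the codimension formula above this forces $d([b'],\mu)-d([b],\mu)=1$, as desired. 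Telescoping along any maximal chain then yields Theorem \ref{poset}. The main subtlety is justifying the codimension arithmetic inside the ind-scheme $\dc\bU_c(\brk)$; this is routine because the cross-section is filtered by the $\varpi$-adic expansion of its affine coordinates, so codimensions of nested disks add as they do in the finite-type case.
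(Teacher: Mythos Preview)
Your proposal is correct and follows essentially the same route as the paper: both arguments reduce the length formula to showing that each covering step $[b']\lessdot[b]$ contributes exactly one to the codimension inside the cross-section, and both extract this from the purity clause of Theorem \ref{closure}. The paper phrases the key step contrapositively---given a maximal chain it picks an irreducible boundary component containing the lower stratum and then forces equality by maximality---whereas you argue directly that for a covering relation the lower closure is already maximal among the $\overline{[b'']\cap\dc\bU_c(\brk)}$ with $[b'']<[b]$ and hence is a component; these are the same argument read in opposite directions. Your explicit closure formula $\overline{[b]\cap\dc\bU_c(\brk)}=\varpi^\mu\prod_i\ts_{\a_i}\bU_{\a_i}(\varpi^{n_i(b)}\COk)$ coincides with equation (a) in the paper's proof of Theorem \ref{closure}, so you are in effect reproving a small piece of that theorem before invoking it, but this does no harm.
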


\

In \cite{HNY}, He, Yu and the author show that an affine Deligne-Lusztig varieties with a finite Coxeter part has a simple geometric structure, see also \cite{Sh}. A key ingredient of our approach is a combinatorical identity, whose proof is quite involved. We will give a new and direct proof of the identity by counting $k$-rational points on the (truncated) loop Steinberg's cross-section. The same idea also guides us to discover a new combinatorical identity.
\begin{theorem} \label{identity}
    Let $\mu \in X_*(\bT)$. Then we have \[\sum_{[b] \in B(\bG, \mu)} (\bq-1)^{\sharp J_{\nu(b)}} \bq^{- \sharp J_{\nu(b)} - \leng([\varpi^\mu], [b])} = 1,\] where $J_{\nu(b)} = \{1 \le i \le r; \<\nu(b), \a_i\> \neq 0\}$. Moreover, if $B(\bG, \mu)_\irr \neq \emptyset$, then \[\sum_{[b] \in B(\bG, \mu)_\irr} (\bq-1)^{\sharp J_{\nu(b)}} \bq^{r - \sharp J_{\nu(b)} - \leng([\varpi^\mu], [b])} = 1.\] Here $B(\bG, \mu)_\irr$ is the set of Hodge-Newton irreducible elements in $B(\bG, \mu)$, see \S \ref{subsec:identity}.
\end{theorem}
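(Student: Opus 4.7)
The plan is to deduce both identities from Theorem \ref{main} via a measure-theoretic computation on the loop Steinberg's cross-section. I would first introduce a normalized measure on each root subgroup $\bU_{\a_i}(\COk)$ --- rigorously, by truncating at a deep enough level and taking classes in the Grothendieck ring of $\BF_q$-varieties with $\BL$ specialized to $\bq$, or equivalently via the natural Haar measure on $\bU_{\a_i}(k)$ --- so that $|\bU_{\a_i}(\COk)| = 1$, $|\bU_{\a_i}(\varpi^m \COk)| = \bq^{-m}$, and $|\bU_{\a_i}(\varpi^m \CO_\brk^\times)| = (\bq-1)\bq^{-m-1}$. Combining Theorem \ref{main} and Theorem \ref{Mazur} gives the disjoint decomposition
\[\varpi^\mu \ts_{\a_1} \bU_{\a_1}(\COk) \cdots \ts_{\a_r} \bU_{\a_r}(\COk) = \bigsqcup_{[b] \in B(\bG, \mu)} [b] \cap \dc\bU_c(\brk).\]
Direct calculation of the measure of each piece, using the integrality of $m_i := \<\mu-\nu(b),\o_i\>$ for $i \in J_{\nu(b)}$ (Definition \ref{def}) together with Chai's formula $\sum_i \lceil m_i \rceil = \leng([\varpi^\mu],[b])$ (Theorem \ref{poset}), produces exactly $(\bq-1)^{\sharp J_{\nu(b)}} \bq^{-\sharp J_{\nu(b)} - \leng([\varpi^\mu], [b])}$. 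Since the total measure of the box equals $1$, this yields the first identity.

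For the second identity, I would similarly compute the measure of the \emph{sub-box}
\[\varpi^\mu \ts_{\a_1} \bU_{\a_1}(\varpi \COk) \cdots \ts_{\a_r} \bU_{\a_r}(\varpi \COk),\]
which equals $\bq^{-r}$. The key geometric claim is the disjoint decomposition
\[\varpi^\mu \ts_{\a_1} \bU_{\a_1}(\varpi \COk) \cdots \ts_{\a_r} \bU_{\a_r}(\varpi \COk) = \bigsqcup_{[b] \in B(\bG, \mu)_\irr} [b] \cap \dc\bU_c(\brk).\]
Unwinding the explicit form of $H_{\mu, b, \a_i}$ in Theorem \ref{main}, $[b] \cap \dc\bU_c(\brk)$ is contained in the sub-box if and only if $\<\mu-\nu(b), \o_i\> > 0$ for every $i$, which is precisely the Hodge--Newton irreducibility condition (i.e., $\mu - \nu(b)$ lies in the relative interior of the positive coroot cone). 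Multiplying the resulting measure equality by $\bq^r$ yields the second identity.

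The main obstacle lies in the sub-box decomposition for the second identity: one must rule out that any stratum $[b] \cap \dc\bU_c(\brk)$ has only a \emph{partial} overlap with the sub-box. When $m_i = 0$ for some $i \in J_{\nu(b)}$, the factor $H_{\mu, b, \a_i} = \bU_{\a_i}(\CO_\brk^\times)$ is disjoint from $\bU_{\a_i}(\varpi \COk)$, so the whole stratum is entirely outside the sub-box. The delicate case is $m_i = 0$ for some $i \notin J_{\nu(b)}$ together with $m_j > 0$ for all $j \in J_{\nu(b)}$. Here I would write $\mu - \nu(b) = \sum_k d_k \a_k^\vee$ with $d_k = m_k \ge 0$; the dominance of $\mu$ gives $\<\mu, \a_i\> = 2 d_i - \sum_{l \sim i} d_l \ge 0$, so $d_i = 0$ forces $d_l = 0$ for every Dynkin neighbor $l$ of $i$, and iterated propagation along the connected Dynkin diagram contradicts $m_j > 0$ for some $j \in J_{\nu(b)}$. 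The residual case $J_{\nu(b)} = \emptyset$ with $\mu$ central is excluded by the hypothesis $B(\bG, \mu)_\irr \neq \emptyset$, since then $[1]$ is the only basic class and fails to be Hodge--Newton irreducible, forcing $B(\bG,\mu)_\irr = \emptyset$.
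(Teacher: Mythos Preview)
Your approach coincides with the paper's: both prove the identities by counting (equivalently, measuring) the truncated cross-section $\dc\bU_c(\COk)$ and the sub-box $\dc\bU_c(\varpi\COk)$ stratum by stratum, using Theorem~\ref{main} for the shape of each stratum and Theorem~\ref{poset} for the exponent. The paper phrases the count as $\sharp\bigl(([b]\cap\dc\bU_c(k))/\bU_c(\varpi^m\CO_k)\bigr)$ for $m\gg0$, which is exactly your Haar-measure normalization.

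The only substantive difference is in your justification of the sub-box decomposition (the paper's Lemma~\ref{indec}). The paper argues by pairing with $\rho_2$, the half sum of positive roots supported on the complement $\Pi\setminus\Pi_1$; your Dynkin-propagation argument is morally equivalent but, as written, has two imprecisions. First, the identity $d_k=m_k$ fails when $\s$ acts nontrivially on $\Pi$: one must write $\mu^\diamond-\nu(b)=\sum_{\a\in\Pi}c_\a\a^\vee$ with $c_\a$ constant on $\s$-orbits, and then $m_i$ equals $|\s\text{-orbit}(\a_i)|\cdot c_{\a_i}$ rather than $c_{\a_i}$ itself (the propagation still goes through since $m_i=0\Leftrightarrow c_{\a_i}=0$). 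Second, your ``residual case'' is stated as $J_{\nu(b)}=\emptyset$ globally, but what the propagation actually produces is that $\mu^\diamond$ is central on the $\s$-orbit of the connected Dynkin component containing $\a_i$; this already contradicts $B(\bG,\mu)_\irr\neq\emptyset$, but the argument must be phrased per component. The paper's $\rho_2$ trick handles both points in one line.
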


\begin{remark} The first identity is new. The second identity is the one from \cite{HNY}, which is also proved by Lim \cite{Lim} based on a probability-theoretic interpretation.
\end{remark}

The last application is devoted to loop Deligne-Lusztig varieties. These varieties were first introduced by Lusztig \cite{L79}. As in the classical Deligne-Lusztig theory \cite{DL}, the cohomologies of loop Deligne-Lusztig varieties are expected to realize interesting representations of $p$-adic groups. In the case of general linear groups and their inner forms, Boyarchenko, Weinstein, Chan, and Ivanov carried out an extensive study of loop Deligne-Lusztig varieties of Coxeter type see \cite{BW}, \cite{Chan}, \cite{CI}, \cite{CI2} and references therein. As a desirable consequence, they obtained a purely local, purely geometric and explicit realization of local Langlands and Jacquet-Langlands correspondences for a wide class of irreducible supercuspidal representations.

A fundamental step in the works mentioned above is to decompose the loop Deligne-Lusztig variety into a union of translates of parahoric level Deligne-Lusztig varieties. By Theorem \ref{main} we have the following general result.
\begin{theorem} \label{dec}
    The decomposition theorem for loop Deligne-Lusztig varieties of Coxeter type holds for all unramified reductive groups, see \cite[Theorem 1.1]{Iv2} for the precise formulation. In particular, these varieties are representable by perfect schemes.
\end{theorem}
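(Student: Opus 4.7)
The plan is to deduce Ivanov's decomposition directly from the intersection formula of Theorem \ref{main}. Recall that a loop Deligne--Lusztig variety of Coxeter type takes the shape
\[X_{\dc}(b) = \{g I \in \bG(\brk)/I : g\i b \s(g) \in I \dc I\},\]
where $I \subseteq \CK$ is the standard Iwahori subgroup associated to $\bB$, $\dc$ is the lift of the twisted Coxeter element from the introduction, and $b$ represents the unique $\s$-conjugacy class meeting $I \dc I$; see \cite{Iv2} for the precise formulation (which may involve a more general parahoric level, handled analogously). The key idea is that $\dc \bU_c(\brk)$ furnishes a system of canonical representatives for $\s$-conjugacy, compatible with the Iwahori level and with the $J_b(k)$-action, so that the decomposition of $X_{\dc}(b)$ is read off from the product decomposition of $[b] \cap \dc \bU_c(\brk)$.

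Concretely, Theorem \ref{main} identifies $[b] \cap \dc \bU_c(\brk) \cap I \dc I$ with an explicit finite-dimensional admissible piece of the iterated product $\varpi^\mu \ts_{\a_1} H_{\mu, b, \a_1} \cdots \ts_{\a_r} H_{\mu, b, \a_r}$. First, I would use this to choose, for every $g I \in X_{\dc}(b)$, a representative $g$ for which $g\i b \s(g)$ lies in this slice, after modifying on the left by an element of $J_b(k)$; the residual ambiguity is controlled by the stabilizer of a fixed base point $g_0$ and equals $J_b(k) \cap g_0 I g_0\i$. Second, the product structure of the slice pulls back to an iterated root-group fibration on each fiber of the map $g I \mapsto g\i b \s(g)$, and this fibration is a translate of the classical (finite-type) Deligne--Lusztig variety $Y$ attached to a Coxeter element of the reductive quotient of the parahoric of $J_b(k)$ cut out by the valuations appearing in Theorem \ref{main}. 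Assembling the $J_b(k)$-translates then yields
\[X_{\dc}(b) \;\cong\; \bigsqcup_{j \in J_b(k)/(J_b(k) \cap g_0 I g_0\i)} j \cdot Y,\]
with representability by perfect schemes following because each stratum is glued from finite-dimensional affine pieces controlled by the filtration in Theorem \ref{main}.

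The hard part is not the existence of such a decomposition, which is essentially forced by Theorem \ref{main}, but the precise \emph{matching} of combinatorial data: one must identify $Y$ with the classical Deligne--Lusztig variety predicted in \cite[Theorem 1.1]{Iv2} (attached to a specific reductive quotient, Frobenius, and Coxeter element) and match the indexing coset space with Ivanov's. This amounts to translating the valuations $\<\mu - \nu(b), \o_i\>$ and the admissible pieces $H_{\mu, b, \a_i}$ into Moy--Prasad filtration data of the relevant parahoric in $J_b(k)$, and tracking how the $\s$-Coxeter element $c$ descends to its reductive quotient. Once this dictionary is in place, Theorem \ref{dec} follows directly from Theorem \ref{main}.
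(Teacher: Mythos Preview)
The paper does not actually give a proof of Theorem~\ref{dec}: in the introduction it asserts ``By Theorem~\ref{main} we have the following general result'' and then states Theorem~\ref{dec}, with the details left to \cite{Iv2} (classical groups) and \cite{IN} (in preparation). Section~5 treats only Theorems~\ref{closure}, \ref{poset} and~\ref{identity}. So there is no proof in the paper to compare your outline against; your sketch is essentially an attempt to flesh out what the paper leaves implicit, and in spirit it is the same route: feed the explicit description of $[b]\cap\dc\bU_c(\brk)$ from Theorem~\ref{main} into Ivanov's framework.

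That said, there is a genuine confusion in your step~(1). Modifying $g$ on the left by $j\in J_b(k)$ does \emph{nothing} to $g^{-1}b\,\s(g)$: by definition of $J_b$ one has $j^{-1}b\,\s(j)=b$, hence $(jg)^{-1}b\,\s(jg)=g^{-1}b\,\s(g)$. What moves $g^{-1}b\,\s(g)\in I\dc I$ into the cross-section $\dc\bU_c(\brk)$ is modifying $g$ on the \emph{right} by an element of $I$, i.e.\ choosing a good representative of the coset $gI$; this is the Iwahori-level analogue of the isomorphism $\Psi_{\dc}$ in Theorem~\ref{crossing}. The $J_b(k)$-action enters at the next stage: once $g^{-1}b\,\s(g)$ has been normalized to a fixed point of the slice, the remaining ambiguity in $g$ is precisely left multiplication by $J_b(k)$ and right multiplication by a subgroup of $I$, and unwinding this is what produces both the coset index set and the parahoric-level Deligne--Lusztig variety $Y$. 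Getting these two actions in the correct order, and carrying out the Moy--Prasad bookkeeping you allude to at the end, is exactly the content of \cite[Theorem~1.1]{Iv2}; Theorem~\ref{main} is the new input that lets that argument run for arbitrary unramified $\bG$.
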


\begin{remark}
    Theorem \ref{dec} is also proved in \cite{IN} using a different method. The last statement of the theorem confirms a conjecture of Ivanov \cite[Conjecture 1.1]{Iv1} in the basic case. If $\bG$ is of classical type, the theorem is proved by Ivanov \cite{Iv2}.
\end{remark}

\subsection{Outline} The note is organized as follows. In \S 1 we introduce basic notions/constructions, and reduce Theorem \ref{main} to certain translation case, see Theorem \ref{st-cross}. In \S 2 we study the intersection $[b] \cap \dc \bT(\COk) \bU_c(\brk)$ when $b$ is a translation, and show it equals the image of certain explicit map, see Proposition \ref{red}. In \S 3 we make a digression to introduce an algorithm to compute the image. In \S 4 we finish the proof of Theorem \ref{main} using the algorithm. In the last section, we prove Theorem 0.6 and Theorem 0.7 as applications of Theorem \ref{main}.

\subsection*{Acknowledgement} We would like to thank Xuhua He, Alexander Ivanov and Qingchao Yu for helpful comments and discussions.

\section{Preliminary}
In this section, we introduce basic notations and reduce Theorem \ref{main} to the translation case, namely the case where $b = \varpi^\chi$ for certain $\chi \in X_*(\bT)$.

\subsection{} \label{setup-subsec}
We keep the notations $\bG, \bB, \bU, \bT, \bN, W$ introduced in the introduction. Let $\Phi^+ \subseteq \Phi$ be the set of (positive) roots of $\bT$ that appears in $\bB$. We have $\Phi = \Phi^+ \sqcup \Phi^-$ with $\Phi^- = - \Phi^+$. Let $\Pi \subseteq \Phi^+$ be the set of simple roots. We say $v \in X_*(\bT)$ is dominant if $\<v, \a\> \ge 0$ for all $\a \in \Phi^+$. Here $\<, \>: X_*(\bT) \otimes X_*(\bT) \to \BQ$ is
the natural pairing. Denote by $\BS \subseteq W$ the corresponding set of simple reflections of $W$. Then $(W, \BS)$ is a Coxeter system.

Let $\a \in \Phi$. We denote by $\a^\vee$ the coroot of $\a$, and let $s_\a \in W$ be the reflection sending $\l$ to $\l - \<\l, \a\> \a^\vee$ for $\l \in X_*(\bT)$. Choose root subgroups $\bU_\a: \BG_a \to \bG$ such that for $z \in \brk^\times$ we have \[\tag{*} \bU_{-\a}(-z\i) \bU_\a(z) \bU_{-\a}(-z\i) = z^{\a^\vee} \tilde s_{-\a} \in \bN(\brk),\] where $\tilde s_{-\a} = \bU_{-\a}(-1) \bU_\a(1) \bU_{-\a}(-1) \in \bN(\CO_{\brk})$.

Let $b \in \bG(\brk)$. Then Kottwitz point $\k(b) = \k_\bG(b)$ is the image of $b$ under the natural projection $\k: \bG(\brk) \to \pi_1(\bG)_\s = \pi_1(G) / (1-\s)\pi_1(\bG)$. To define the Newton point, note that there exists a representative $x = \varpi^\l \dw \in [b]$, where $\l \in X_*(\bT)$ and $\dw \in \bN(\COk)$ is a lift of some element $w \in W$. Let $\nu_x \in X_*(\bT)_\BQ$ be the $(w\s)$-average of $\mu$. Then $\nu(b) = \nu_\bG(b)$ equals the unique dominant $W$-conjugate of the $\nu_x$. Note that $\nu(b) = \s(\nu(b))$ is independent of the choice of $x \in \bN(\brk) \cap [b]$.

\subsection{} \label{cox-subsec}
Let $\{\a_1, \dots, \a_r\}$ be a representative set for the $\s$-orbits of $\Pi$. Let $c = s_{\a_1} s_{\a_2} \cdots s_{\a_r} \in W$, which is a minimal $\s$-Coxeter element.

Let  $\xi = \sum_{i=1}^r n_i \a_i^\vee$ with $n_i \in \BZ$. We define \[K_\xi = \bU_{-\a_1}(\varpi^{m_1} \CO_\brk^\times) \cdots \bU_{-\a_r}(\varpi^{m_r} \CO_\brk^\times),\] where $m_i = -n_i - \sum_{j = i+1}^r n_j \<\a_j^\vee, \a_i\>$ for $1 \le i \le r$.

\begin{lemma} \label{mu-eta}
    Let $\mu, \eta \in X_*(\bT)$. Let $\dc \in \varpi^\mu \bN(\COk)$ be a lift of $c$. Then \[ \varpi^\eta \bT(\COk) \bU_{-\a_1}(\brk^\times) \cdots \bU_{-\a_r}(\brk^\times) \cap \bU(\brk) \dc \bT(\COk) \bU(\brk) = \varpi^\eta \bT(\COk) K_{\mu-\eta}.\] Here we set $K_{\mu-\eta} = \emptyset$ if $\mu - \eta \notin \sum_{i=1}^r \BZ \a_i^\vee$.
\end{lemma}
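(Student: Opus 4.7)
The plan is to put a generic element of the LHS into Bruhat form and read off when it lands in the prescribed cell $\bU(\brk)\dc\bT(\COk)\bU(\brk)$. Fix $u_i \in \brk^\times$ and write $A := \bU_{-\a_1}(u_1)\cdots\bU_{-\a_r}(u_r)$. The key step is to prove
\[
A \;\in\; \bU(\brk)\cdot T \cdot \dot c_0 \cdot \bU(\brk), \qquad T := \prod_{i=1}^r u_i^{-\gamma_i}, \quad \gamma_i := s_{\a_1}\cdots s_{\a_{i-1}}(\a_i^\vee),
\]
with $\dot c_0 := \tilde s_{-\a_1}\cdots\tilde s_{-\a_r} \in \bN(\COk)$ (which agrees with any other lift of $c$ in $\bN(\COk)$ modulo $\bT(\COk)$). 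Granting this, $g := \varpi^\eta t_0 A$ lies in $\bU\dc\bT(\COk)\bU$ if and only if the torus on the left of $\dot c_0$ matches that of $\dc \in \varpi^\mu\bN(\COk)$ modulo $\bT(\COk)$, which translates to the single condition
\[
\sum_{i=1}^r \val(u_i)\,\gamma_i \;=\; \eta - \mu \quad \text{in } X_*(\bT).
\]
Since $\gamma_i \in \a_i^\vee + \bigoplus_{j<i}\BZ\a_j^\vee$, the transition matrix between the two bases is unimodular, so the equation has an integer solution iff $\mu-\eta \in \sum\BZ\a_i^\vee$ (i.e., $K_{\mu-\eta} \neq \emptyset$); then the valuations $\val(u_i)$ are uniquely determined while the unit parts of $u_i$ are free, giving exactly $\varpi^\eta\bT(\COk)K_{\mu-\eta}$.

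To establish the Bruhat form of $A$, I induct on $r$ using (*) on the rightmost factor $\bU_{-\a_r}(u_r) = u_r^{-\a_r^\vee}\bU_{\a_r}(u_r)\tilde s_{-\a_r}\bU_{\a_r}(u_r^{-1})$. Commuting the Steinberg torus $u_r^{-\a_r^\vee}$ past $\dot c_{r-1} := \tilde s_{-\a_1}\cdots\tilde s_{-\a_{r-1}}$ turns it into $u_r^{-c_{r-1}(\a_r^\vee)} = u_r^{-\gamma_r}$, which combines with $T_{r-1}$ to produce $T_r$. The subtle point is the unipotent factor $u \in \bU$ sandwiched between $\dot c_{r-1}$ and $\tilde s_{-\a_r}$: one rewrites $\dot c_{r-1}\,u\,\tilde s_{-\a_r} = c_{r-1}(u)\cdot\dot c_r$, decomposes $c_{r-1}(u) = v^+ v^-$ with $v^+\in\bU$ and $v^-\in\bU^-$, and then $v^-\dot c_r = \dot c_r\cdot c_r^{-1}(v^-)$; I claim $c_r^{-1}(v^-) \in \bU$. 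This rests on the positivity observation $c_{r-1}(\a_r) \in \Phi^+$, which follows by iteratively applying $s_{\a_{r-1}},\ldots,s_{\a_1}$ to $\a_r$: at every stage the active simple reflection has zero coefficient in the current sum, and $\langle\a_i,\a_j^\vee\rangle\leq 0$ for $i\neq j$ preserves non-negativity of the coefficients. Hence $\a_r \notin \mathrm{Inv}(c_{r-1})$; the roots appearing in $v^-$ are of the form $c_{r-1}(\a)$ for $\a \in \mathrm{Inv}(c_{r-1}) \subset \Phi^+\setminus\{\a_r\}$, and $c_r^{-1}(c_{r-1}(\a)) = s_{\a_r}(\a) \in \Phi^+$ since $s_{\a_r}$ permutes $\Phi^+\setminus\{\a_r\}$. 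This is the main technical hurdle; all else is bookkeeping.

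It remains to identify the unique solution of $\sum v_i\gamma_i = \eta-\mu$ as $v_i = m_i$. Writing $\mu-\eta = \sum_j n_j\a_j^\vee$ and substituting the definition of $m_j$, the claim reduces to the purely combinatorial identity
\[
\gamma_j + \sum_{i<j}\langle\a_i,\a_j^\vee\rangle\,\gamma_i \;=\; \a_j^\vee \qquad (1 \le j \le r).
\]
I verify this by induction on $k$ applied to the intermediate elements $\gamma_j^{(k)} := s_{\a_1}\cdots s_{\a_k}(\a_j^\vee)$, via the recursion $\gamma_j^{(k)} = \gamma_j^{(k-1)} - \langle\a_k,\a_j^\vee\rangle\,\gamma_k$, obtained by applying $s_{\a_1}\cdots s_{\a_{k-1}}$ to $s_{\a_k}(\a_j^\vee) = \a_j^\vee - \langle\a_k,\a_j^\vee\rangle\,\a_k^\vee$ and noting $\gamma_k = \gamma_k^{(k-1)}$; iterating from $k=0$ to $k=j-1$ yields $\gamma_j - \a_j^\vee = -\sum_{i<j}\langle\a_i,\a_j^\vee\rangle\,\gamma_i$, as claimed.
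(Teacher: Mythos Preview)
Your argument is correct, but it is organized differently from the paper's proof and carries more overhead.

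The paper also inducts on $r$, but it keeps the shape $\varpi^{\eta'}\bT(\COk)\bU_{-\a_1}(z_1')\cdots\bU_{-\a_{r-1}}(z_{r-1}')$ at every stage. After expanding $\bU_{-\a_r}(z_r)$ via (*), the factor $\bU_{\a_r}(z_r^{-1})$ slides freely past each $\bU_{-\a_i}(z_i)$ because for distinct simple roots no combination $p\a_r - q\a_i$ with $p,q\ge 1$ is a root; only the torus factor $z_r^{-\a_r^\vee}$ interacts with the remaining $\bU_{-\a_i}$'s, rescaling $z_i$ to $z_r^{-\<\a_r^\vee,\a_i\>}z_i$. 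One then strips off $\tilde s_{\a_r}$ on the right and lands exactly in the inductive hypothesis for $c\tilde s_{\a_r}^{-1}$, from which the constants $m_i$ drop out directly. No Bruhat normal form is ever written down, no decomposition $v^+v^-$ is needed, and no separate basis--change identity has to be verified.

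Your route---first producing a global Bruhat form $A\in\bU\cdot T\cdot\dot c_0\cdot\bU$ with $T=\prod u_i^{-\gamma_i}$, then matching the torus part and inverting the unimodular transition between $\{\gamma_i\}$ and $\{\a_i^\vee\}$---is conceptually pleasant because it exposes the linear algebra cleanly, but it forces you through the ``subtle point'' about $c_r^{-1}(v^-)\in\bU$ and the recursion for $\gamma_j^{(k)}$. Both are handled correctly (in particular your use of $s_{\a_r}(\Phi^+\setminus\{\a_r\})\subseteq\Phi^+$ is the right mechanism), so the proof stands; it is simply longer than necessary. One small notational wobble: the paper's pairing is $\<\,\cdot\,,\,\cdot\,\>:X_*(\bT)\times X^*(\bT)\to\BQ$, so what you write as $\<\a_i,\a_j^\vee\>$ should be $\<\a_j^\vee,\a_i\>$ to match the definition of $m_i$; your recursion is consistent once this is corrected.
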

\begin{proof}
    We argue by induction on $r$. If $r = 0$ the statement is trivial. Let $z_1, \dots, z_r \in \brk^\times$ such that \[\tag{a} \varpi^\eta \bU_{-\a_1}(z_1) \cdots \bU_{-\a_r}(z_r) \in \bU(\brk) \dc \bT(\COk) \bU(\brk).\] Let $n_r \in \BZ$ such that $z_r \in \varpi^{-n_r} \CO_\brk^\times$. By \S \ref{setup-subsec} (*) we have \begin{align*}
        &\quad\ \varpi^\eta \bU_{-\a_1}(z_1) \cdots \bU_{-\a_r}(z_r) \\ &= \varpi^\eta \bU_{-\a_1}(z_1) \cdots \bU_{-\a_{r-1}}(z_{r-1}) \bU_{\a_r}(z_r\i) z_r^{-\a_r^\vee} \tilde s_{\a_r} \bU_{\a_r}(z_r\i) \\ &\subseteq \bU_{\a_r}(\brk) \varpi^{\eta + n_r \a_r^\vee} \bT(\COk) \bU_{-\a_1}(z_r^{-\<\a_r^\vee, \a_1\>} z_1) \cdots \bU_{-\a_{r-1}}(z_r^{-\<\a_r^\vee, \a_{r-1}\> }z_{r-1}) \tilde s_{\a_r} \bU_{\a_r}(\brk).
    \end{align*}

    As the simple reflections $s_{\a_i}$ are distinct,  (a) is equivalent to the following inclusion \[\varpi^{\eta + n_r \a_r^\vee} \bU_{-\a_1}(z_r^{-\<\a_r^\vee, \a_1\>} z_1) \cdots \bU_{-\a_{r-1}}(z_r^{-\<\a_r^\vee, \a_{r-1}\> }z_{r-1}) \in \bU(\brk) \dc \tilde s_{\a_r} \bT(\COk) \bU(\brk).\] Now the statement follows by induction hypothesis.
\end{proof}

\subsection{} \label{H-subsec}
For $w \in W$ we set $\Phi_{w\s}^+ = \Phi^+ \cap (w\s)\i(\Phi^-)$. Then $\Phi_{c\s}^+  = \{\b_1, \dots, \b_r\}$, where $\b_i = \s\i s_{\a_r} \cdots s_{\a_{i+1}}(\a_i) \in \Phi^+$ for $1 \le i \le r$. Note that $\bU_c = \prod_{i=1}^r \bU_{\s(\b_i)}$.

\begin{lemma} \label{commute}
    The root subgroups $\bU_{\b_i}$ for $1 \le i \le r$ commute with each other.
\end{lemma}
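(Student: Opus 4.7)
The plan is to show that for all $1 \le i < j \le r$ and all integers $p, q \ge 1$, the vector $p\b_i + q\b_j$ is not a root of $\bG$. Once this is established, Chevalley's commutator formula gives $[\bU_{\b_i}(x), \bU_{\b_j}(y)] = 1$ for all $x, y \in \brk$, proving the lemma.

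First I would exploit that $\s$ and the simple reflections act bijectively on $\Phi$. Setting $\t = s_{\a_{j+1}} s_{\a_{j+2}} \cdots s_{\a_r} \circ \s$, telescoping cancellations using $s_{\a_k}^2 = 1$ yield
\[ \t(\b_i) = s_{\a_j} s_{\a_{j-1}} \cdots s_{\a_{i+1}}(\a_i), \qquad \t(\b_j) = \a_j. \]
Therefore $p\b_i + q\b_j \in \Phi$ if and only if $p \, \t(\b_i) + q \a_j \in \Phi$, and, applying one further reflection $s_{\a_j}$, if and only if
\[ \d := p \cdot s_{\a_{j-1}} s_{\a_{j-2}} \cdots s_{\a_{i+1}}(\a_i) - q \a_j \]
lies in $\Phi$.

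The remaining step is combinatorial. Since $\{\a_1, \dots, \a_r\}$ consists of representatives of pairwise distinct $\s$-orbits of $\Pi$, the simple roots $\a_{i+1}, \ldots, \a_{j-1}$ differ from both $\a_i$ and $\a_j$. Hence $s_{\a_{j-1}} \cdots s_{\a_{i+1}}(\a_i)$ is a positive root of the form $\a_i + \sum_{k=i+1}^{j-1} m_k \a_k$ with $m_k \in \BZ_{\ge 0}$, whose $\a_j$-coefficient is zero. Consequently $\d$ has $\a_i$-coefficient $+p > 0$ and $\a_j$-coefficient $-q < 0$; since every root has simple-root coefficients of a common sign, the mixed-sign vector $\d$ cannot be a root. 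The only potential subtlety is non-simply-laced types, in which Chevalley's formula can a priori involve higher combinations $p\b_i + q\b_j$ with $p$ or $q \ge 2$; the argument above covers all $p, q \ge 1$ uniformly, so no separate case analysis is required.
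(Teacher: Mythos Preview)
Your proof is correct. Both your argument and the paper's hinge on the same triangular structure $\s(\b_l) = s_{\a_r}\cdots s_{\a_{l+1}}(\a_l) \in \a_l + \sum_{j>l}\BZ_{\ge 0}\a_j$, but they deploy it differently. The paper observes that any root of the form $\b_i+\b_{i'}$ would still lie in $\Phi_{c\s}^+=\{\b_1,\dots,\b_r\}$ (being positive and sent to a negative root by $c\s$), hence would equal some $\b_{i''}$; comparing the $\a_i$-coefficients in the triangular expansion then forces $i''=i$, which is absurd. You instead apply the Weyl element $s_{\a_j}\circ\t$ to transport $p\b_i+q\b_j$ to a vector $\d$ visibly carrying simple-root coefficients of opposite sign, bypassing any reference to the enumeration of $\Phi_{c\s}^+$. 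Your route is slightly more self-contained and is explicit about covering all $p,q\ge 1$ (the paper writes only the case $p=q=1$, though its argument extends verbatim); the paper's route is a touch shorter once one has the description of $\Phi_{c\s}^+$ in hand.
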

\begin{proof}
Assume otherwise. Then $\b_i + \b_{i'} \in \Phi_{c\s}^+$ for some $1 \le i < i' \le r$. So $\b_i + \b_{i'} = \b_{i''}$ for some $1 \le i'' \le r$. As $c$ is product of distinct simple reflections, it follows that \[\s(\b_l) \in \a_l + \sum_{l+1 \le j \le r} \BZ_{\ge 0} \a_j.\] Thus $i'' = i$, which is impossible.
\end{proof}

Let $J \subseteq \{1, \dots, r\}$ and $\l \in X_*(\bT)_\BQ$ such that $\<\l, \b_i\> \in \BZ$ for $i \in J$. We define \[H_{c\s, J}^\l = \prod_{i \in J} \bU_{\b_i}(\varpi^{\<\l, \b_i\>}\CO_\brk^\times) \prod_{i \in  \{1, \dots, r\} -  J} \bU_{\b_i}(\varpi^{\lceil \<\l, \b_i\> \rceil} \COk),\] which is independent of the choice of the product order by Lemma \ref{commute}.

For a simple root $\a \in \Pi$ let $\o_\a \in \BR\Phi$ be the fundamental weights corresponding to $\a$. For $1 \le i \le r$ let $\o_i = \sum_\a \o_\a$, where $\a$ ranges over the $\s$-orbit of $\a_i$. Let $\Phi^\vee \subseteq X_*(\bT)$ denote the set of coroots of $\bT$ in $\bG$.
\begin{lemma} \label{eq}
    Let $\mu \in X_*(\bT)$ and $b \in G(\brk)$ such that $\k(\varpi^\mu) = \k(b)$. Then there exists a unique vector $\l \in X_*(\bT)_\BQ$ modulo $X_*(\bZ)_\BQ^\s$ such that $\mu - \nu(b) = \l - c\s(\l)$, where $\bZ$ is the center of $\bG$. Moreover,

    (1) $\<\l, \g\> + \<\mu, c\s(\g)\>  - \<\l, c\s(\g)\> = \<\nu(b), c\s(\g)\>$ for $\g \in \Phi$;

    (2) $\<\l, \b_i\> = \<\mu - \nu(b), \o_i\>$ for $1 \le i \le r$;

    (3) $\<\mu, \a_1\> - \<\l, \b_1\> - \<\l, \a_1\> = \<\nu(b), \a_1\>$.
\end{lemma}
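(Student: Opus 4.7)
The plan is to establish existence and uniqueness of $\l$ first, and then derive the three identities by straightforward linear manipulation, with a single Coxeter-element computation serving as the only nontrivial ingredient.

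For existence and uniqueness of $\l$ modulo $X_*(\bZ)_\BQ^\s$, the key input is that $c$ is a $\s$-Coxeter element of $W$, which implies that $c\s$ has no nonzero fixed vectors on the semisimple part $X_*(\bT)_\BQ / X_*(\bZ)_\BQ$; equivalently, $1 - c\s$ is bijective on $\BQ\Phi^\vee$. Combined with the fact that $c$ acts trivially on $X_*(\bZ)_\BQ$, so that the restriction of $1 - c\s$ to the center becomes $1 - \s$, this yields $\ker(1 - c\s \mid X_*(\bT)_\BQ) = X_*(\bZ)_\BQ^\s$, giving uniqueness. For existence, I would first establish
\[ (1 - c\s)X_*(\bT)_\BQ = (1 - \s)X_*(\bT)_\BQ + \BQ\Phi^\vee; \]
the inclusion $(\subseteq)$ follows from $(1-c\s)\l - (1-\s)\l = (1-c)\s\l \in \BQ\Phi^\vee$, and $(\supseteq)$ from the bijectivity of $1-c\s$ on $\BQ\Phi^\vee$. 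The Kottwitz hypothesis $\k(\varpi^\mu) = \k(b)$ exactly forces $\mu - \nu(b)$ to lie in the right-hand side, in view of the standard rational identification of $\k$ with the class of the Newton point, hence in the image of $1 - c\s$.

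Identity (1) is then immediate: substituting $\nu(b) = \mu - \l + c\s\l$ into its right-hand side and using the $(c\s)$-invariance $\<c\s\l, c\s\g\> = \<\l, \g\>$ reproduces the left-hand side. For identity (2), pairing $\mu - \nu(b) = \l - c\s\l$ with $\o_i$ and rewriting the second term as $\<\l, (c\s)^{-1}\o_i\>$ reduces the claim to the Coxeter-element identity
\[ \o_i - (c\s)^{-1} \o_i = \b_i, \]
or equivalently $c\s\o_i - \o_i = c\s\b_i$. To prove this, I would use that $\s$ permutes the fundamental weights within each $\s$-orbit (so $\s\o_i = \o_i$), and that among the orbit representatives $\a_1, \dots, \a_r$ only $\a_i$ itself lies in the $\s$-orbit of $\a_i$. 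The latter means $s_{\a_j}\o_\a = \o_\a$ for every $\a$ in the orbit of $\a_i$ whenever $j \neq i$, while $s_{\a_i}\o_{\a_i} = \o_{\a_i} - \a_i$. A short telescoping through $c = s_{\a_1}\cdots s_{\a_r}$ then gives $c\o_i = \o_i - s_{\a_1}\cdots s_{\a_{i-1}}\a_i$, which matches the direct simplification $c\s\b_i = c s_{\a_r}\cdots s_{\a_{i+1}}\a_i = s_{\a_1}\cdots s_{\a_i}\a_i = -s_{\a_1}\cdots s_{\a_{i-1}}\a_i$.

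Finally, identity (3) is obtained by specializing (1) to $\g = \b_1$: the $i=1$ case of the previous computation gives $c\s\b_1 = s_{\a_1}\a_1 = -\a_1$, and rearranging (1) in this case yields (3) directly. The only non-routine step is the Weyl-group identity underlying (2); everything else reduces to elementary linear algebra on $X_*(\bT)_\BQ$.
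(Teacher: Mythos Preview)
Your proposal is correct and follows essentially the same route as the paper: both use that $1-c\s$ is an automorphism of $\BQ\Phi^\vee$ for existence/uniqueness, derive (1) immediately from the defining equation, and obtain (3) by specializing (1) at $\g=\b_1$ via $c\s(\b_1)=-\a_1$. The only cosmetic difference is in (2): the paper expands $c\s(\l)=\s(\l)-\sum_l\<\l,\b_l\>\a_l^\vee$ on the cocharacter side and pairs with $\o_i$, whereas you prove the dual identity $\o_i-(c\s)^{-1}\o_i=\b_i$ on the character side and pair with $\l$; the two computations are equivalent.
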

\begin{proof}
    As $\k(\varpi^\mu) = \k(b)$ we have $\mu - \nu(b) \in \BQ \Phi^\vee$. As  $1-c\s$ restricts to an automorphism of $\BQ\Phi^\vee$, there exists a unique $\l \in \BQ\Phi^\vee$ such that \[\tag{a} \mu - \nu(b) = \l - c\s(\l).\] The uniquness of $\l$ follows from that $(1-c\s)$ preserves the each factor of the direct sum $X_*(\bT)_\BQ = \BQ \Phi^\vee \oplus X_*(\bZ)_\BQ$.

    Now (1) follows directly from (a). Note that $c\s(\l) = \s(\l) - \sum_{l=1}^r \<\l, \b_l\> \a_l^\vee$ and $\s(\o_i) = \o_i$. Thus, (2) follows from that \[ \<\mu - \nu(b), \o_i\> = \<\l - \s(\l), \o_i\> + \sum_{l=1}^r \<\l, \b_l\> \<\a_l^\vee, \o_i\> = \<\l, \b_i\>.\] Finally, (3) follows from (1) by noticing that $-\a_1 = c\s(\b_1)$.
\end{proof}

\begin{definition} \label{def}
Let $\mu, b, \l$ be as in Lemma \ref{eq}. Set \[J_{\nu(b)} = \{1 \le i \le r; \<\nu(b), \a_i\> \neq 0\}.\] By \cite[Lemma 3.5]{HN}, \[\<\l, \b_i\> = \<\mu - \nu(b), \o_i\> \in \BZ \text{ for } i \in J_{\nu(b)}.\] We define \[H_{c, \mu, b} = {}^\s H_{c\s, J_{\nu(b)}}^\l \subseteq \bU_c(\brk),\] which is independent of the choice of $\l \in X_*(\bT)$.
\end{definition}

Now Theorem \ref{main} amounts to the following result.
\begin{theorem} \label{main'}
    Let $\mu \in X_*(\bT)$ and $b \in \bG(\brk)$ such that $\k(\varpi^\mu) = \k(b)$. Let $\dc \in \varpi^\mu \bN(\COk)$ be a lift of $c$. Then \[[b] \cap \dc \bU_c(\brk) = \dc H_{c, \mu, b} = \dc \ts_{\a_1} H_{\mu, b, \a_1} \cdots \ts_{\a_r} H_{\mu, b, \a_r},\] where \[H_{\mu, b, \a_i} = \begin{cases} \bU_{\a_i}(\varpi^{\<\mu-\nu(b),  \o_i\>} \CO_\brk^\times); &\text{ if } i \in J_{\nu(b)}\\ \bU_{\a_r}(\varpi^{\lceil \<\mu-\nu(b), \o_i\> \rceil} \COk), &\text{ otherwise.} \end{cases}\]
\end{theorem}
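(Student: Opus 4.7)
The plan is to follow the outline given in the introduction: first prove the theorem for $b$ a translation (\S\S2--3), then reduce the general case to this (\S4). The central tool is the auxiliary vector $\l \in X_*(\bT)_\BQ$, unique modulo $X_*(\bZ)_\BQ^\s$, provided by Lemma \ref{eq} as the solution of $\mu-\nu(b) = \l - c\s(\l)$. Using the relation $\ts_\a \varpi^v = \varpi^{s_\a v}\ts_\a$ for $v \in X_*(\bT)$ one directly computes that $\s$-conjugation by $\varpi^\l$ transforms $\dc$ into
\[
\varpi^{-\l}\,\dc\,\varpi^{\s(\l)} \;=\; \varpi^{\mu-(\l-c\s\l)}\,\ts_{\a_1}\cdots\ts_{\a_r} \;=\; \varpi^{\nu(b)}\,\ts_{\a_1}\cdots\ts_{\a_r},
\]
interpreted in a suitable extension if $\nu(b)$ is only rational, and carries $u = \prod_i \bU_{\s\b_i}(y_i) \in \bU_c(\brk)$ to $\prod_i \bU_{\s\b_i}(\varpi^{-\<\l,\b_i\>} y_i)$. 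By Lemma \ref{eq}(2), the valuation shift in the $i$-th factor equals $-\<\mu-\nu(b),\o_i\>$, which is exactly the exponent appearing in $H_{c,\mu,b}$. Hence $\s$-conjugation by $\varpi^\l$ carries $\dc H_{c,\mu,b}$ to a ``normalized'' slice $\varpi^{\nu(b)}\ts_{\a_1}\cdots\ts_{\a_r} \cdot H'$, where $H'$ is a product of root subgroups at valuation $0$ (units at indices in $J_{\nu(b)}$, integral elsewhere). This reduces the analysis to the translation setting.

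\textbf{Translation case and algorithm.} Proposition \ref{red} of \S2 would analyze $[\varpi^\chi] \cap \dc\,\bT(\COk)\bU_c(\brk)$ for a translation $\varpi^\chi$ with $\k(\varpi^\chi) = \k(\dc)$, expressing it as the image of an explicit parametrization. The bridge between the Steinberg-section form $\dc u$ and the ``anti-Bruhat'' form $\varpi^\eta\,\bT(\COk)\,\bU_{-\a_1}(\brk^\times)\cdots\bU_{-\a_r}(\brk^\times)$ governed by Lemma \ref{mu-eta} is the iterative algorithm of \S3, which repeatedly applies the braid-type identity \S\ref{setup-subsec}(*),
\[
\bU_{-\a}(-z^{-1})\,\bU_\a(z)\,\bU_{-\a}(-z^{-1}) \;=\; z^{\a^\vee}\,\ts_{-\a},
\]
to trade each pair $\ts_{\a_i}\bU_{\a_i}(y_i)$ for a product of two negative root subgroups together with a torus factor $y_i^{\a_i^\vee}$. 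Accumulating these torus factors produces the target $\eta$, and Lemma \ref{mu-eta} pins down when the result lies in $\bU(\brk)\dc\,\bT(\COk)\bU(\brk)$ -- hence, via the normalization in the previous paragraph, inside the fixed class $[b]$.

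\textbf{Main obstacle.} The principal difficulty is the dichotomy in the definition of $H_{c,\mu,b}$: for indices $i \in J_{\nu(b)}$ one insists that $y_i$ is a unit multiple of $\varpi^{\<\mu-\nu(b),\o_i\>}$, whereas for $i \notin J_{\nu(b)}$ only $\val(y_i) \ge \lceil \<\mu-\nu(b),\o_i\>\rceil$ is required. This dichotomy must emerge naturally from the algorithm: the unit condition at the ``active'' indices should arise exactly from those steps where the torus factor $y_i^{\a_i^\vee}$ contributes non-trivially in the $\a_i^\vee$ direction of $\eta$, which in turn is controlled by Lemma \ref{eq}(2)--(3). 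Tracking the precise valuations through all $r$ iterations of \S\ref{setup-subsec}(*) -- and showing that both inclusions $\dc H_{c,\mu,b} \subseteq [b]$ and $[b] \cap \dc\,\bU_c(\brk) \subseteq \dc H_{c,\mu,b}$ fall out of the comparison with Lemma \ref{mu-eta} -- is where the bulk of the combinatorial work will lie.
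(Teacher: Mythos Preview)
Your reduction step via $\s$-conjugation by $\varpi^\l$ is not wrong, but it is more elaborate than what the paper does. The paper simply passes to the ramified extension $\brk_N = \brk[\varpi^{1/N}]$ (where $\l$ becomes integral), applies Theorem~\ref{st-cross} over $\brk_N$ to obtain $[b]_N \cap \dc\bU_c(\brk_N) = \dc H_{c,\mu,b,N}$, intersects back with $\dc\bU_c(\brk)$ for one inclusion, and uses Kottwitz's classification (that $\k$ and $\nu$ together determine $[b]$) for the reverse inclusion. No conjugation is performed; the entire reduction occupies two lines.

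Your account of the translation case, however, has a substantive gap: you misidentify the content of \S3. The algorithm there is \emph{not} an iterated application of the identity~(*). It is an iteration of the straightening map $\Xi_{\dc\s}\colon (x,y)\mapsto(x',y')$ determined by $x'\dc\s y' = \dc\s y x$, and its termination depends on Malten's crossing result (Theorem~\ref{cross}), not on~(*). The identity~(*) enters only once per step of an outer induction on $r$, carried out via the Levi $\bM$ obtained by deleting the $\s$-orbit of $\a_1$; you do not mention this induction, and it is the structural backbone of the proof of Theorem~\ref{st-cross}. Your proposed mechanism for the unit/integral dichotomy is likewise off target. The unit condition at index~$1$ is already present in the input (because $K_{\mu-\eta}$ is built from $\CO_\brk^\times$-entries); the real issue is to show that iterating $\Xi_{\dc\s}$ does not destroy it. For this the paper introduces a filtered coefficient ring $\CR$ with pieces $\CR^{\ge\e}$ and $\CR^{>\e}$, proves that $\Xi_{\dc\s}$ respects both filtrations (Lemma~\ref{preserve}), and observes that when $\<\nu(b),\a_1\>>0$ the initial perturbation $h_1^0$ lies in the strict piece $\bU(\CR)^{>\l}$, so the leading unit term of $h_2^0$ survives unchanged (Lemma~\ref{surj}). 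The remaining case $\<\nu(b),\a_1\>=0$ is reduced via Lemma~\ref{shift} to $\nu(b)$ central, where a direct Lang's-theorem argument on a Moy--Prasad subgroup finishes. None of these ingredients---the Levi induction, the filtered ring $\CR$, Lemma~\ref{preserve}, or the Lang-theorem endgame---appears in your outline, and without them the scheme you describe does not close.
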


\subsection{}
The following result is a special case of Theorem \ref{main'}.
\begin{theorem} \label{st-cross}
    Let $\mu, b, \l$ be as in Lemma \ref{eq}. Let $\dc \in \varpi^\mu \bN(\COk)$ be a lift of $c$. Suppose that $\l \in X_*(\bT)$. Then \[ [b] \cap \dc \bT(\COk) \bU_c(\brk) = \dc \bT(\COk) H_{c, \mu, b},\] that is, $[b] \cap \dc \bU_c(\brk) = \dc H_{c, \mu, b}$.
\end{theorem}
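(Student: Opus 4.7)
My plan is to reduce Theorem \ref{st-cross} via a $\s$-twisted conjugation by $\varpi^\l$ to the translation case treated by Proposition \ref{red} of Section 2. The hypothesis $\l \in X_*(\bT)$ is crucial: it makes $\varpi^\l \in \bT(\brk)$ an actual group element available as a $\s$-conjugator, rather than merely a rational cocharacter.

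The starting computation, writing $\dc = \varpi^\mu \dw_c$ with $\dw_c \in \bN(\COk)$ a lift of $c$, is
\[
\varpi^{-\l} \dc \s(\varpi^\l) \;=\; \varpi^{-\l + \mu + c\s(\l)}\, \dw_c \;=\; \varpi^{\nu(b)} \dw_c \;=:\; b_0,
\]
using $\dw_c \s(\varpi^\l) \dw_c^{-1} = \varpi^{c\s(\l)}$ and the defining relation $\mu-\nu(b) = \l - c\s(\l)$ from Lemma \ref{eq}. Integrality of $\mu$, $\l$ and $c\s(\l)$ forces $\nu(b) \in X_*(\bT)$, so $b_0$ is a genuine translation-times-Weyl element. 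Since $\s$-twisted conjugation preserves every class, the bijection $g \mapsto \varpi^{-\l} g \s(\varpi^\l)$ identifies $[b] \cap \dc\bT(\COk)\bU_c(\brk)$ with $[b] \cap b_0\bT(\COk)\bU_c(\brk)$. To track the image of $\dc \bT(\COk) H_{c,\mu,b}$, I would note that $\Ad(\s(\varpi^\l)^{-1})$ scales $\bU_{\s(\b_i)}(z)$ by $z \mapsto \varpi^{-\<\l,\b_i\>} z$; by Lemma \ref{eq}(2) the exponent $\<\l,\b_i\> = \<\mu-\nu(b), \o_i\>$ exactly cancels the shift encoded into $H_{c,\mu,b}$. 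Thus the image of $\dc \bT(\COk) H_{c,\mu,b}$ is $b_0 \bT(\COk) H_0$ with
\[
H_0 \;=\; \prod_{i \in J_{\nu(b)}} \bU_{\s(\b_i)}(\CO_\brk^\times) \cdot \prod_{i \notin J_{\nu(b)}} \bU_{\s(\b_i)}(\COk),
\]
and Theorem \ref{st-cross} becomes the reduced identity $[b] \cap b_0 \bT(\COk)\bU_c(\brk) = b_0 \bT(\COk) H_0$. This is precisely the translation case established in Proposition \ref{red}, which I invoke to finish.

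The main obstacle is therefore located in Proposition \ref{red} itself. For the inclusion $b_0 \bT(\COk) H_0 \subseteq [b]$, I would construct, for each $g = b_0 t h$, an explicit $\s$-conjugator $x \in \bT(\COk) \bU_c(\brk)$ with $x^{-1} g \s(x) = b_0$; the Chevalley relation \S\ref{setup-subsec}(*) together with the nonnegativity of $\<\nu(b), \a_i\>$ should drive a root-by-root iteration that converges. For the reverse inclusion, given $g \in b_0 \bT(\COk)\bU_c(\brk) \cap [b]$, the idea is to push $g$ into an opposite Bruhat cell by a suitable $\bU$-conjugation and read off the valuation vector of its coordinates using Lemma \ref{mu-eta}; the constraint $\nu(g) = \nu(b)$ will then force the coefficient in $\bU_{\s(\b_i)}$ to be a unit when $i \in J_{\nu(b)}$ and to be integral when $i \notin J_{\nu(b)}$, reproducing exactly the dichotomy built into the definition of $H_{c,\mu,b}$. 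The delicate combinatorial point is handling the alternative ``$\<\nu(b), \a_i\> > 0$ versus $=0$'' uniformly, which is what forces the split into units versus integers.
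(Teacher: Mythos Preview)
Your reduction by $\s$-conjugation with $\varpi^\l$ is formally correct but does not buy anything: it takes Theorem~\ref{st-cross} with parameters $(\mu,b,\l)$ to the \emph{same} theorem with parameters $(\nu(b),b,0)$. The element $b_0=\varpi^{\nu(b)}\dw_c$ is again a lift of $c$, not a translation, so you have not reached a genuinely simpler ``translation case''; you have only normalised $\l$ to $0$. More seriously, Proposition~\ref{red} does not establish the identity you attribute to it. What it proves is
\[
[b]\cap\dc\,\bT(\COk)\bU_c(\brk)=\pr_2\Psi_{\dc}^{-1}\bigl(\varpi^\eta\bT(\COk)K_{\mu-\eta}\bigr),
\]
an implicit description via Steinberg's crossing map $\Psi_{\dc}$. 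The entire difficulty of Theorem~\ref{st-cross} lies in computing this projection and showing it equals $\dc\,\bT(\COk)H_{c,\mu,b}$; that is what Sections~3 and~4 are for. The paper's proof proceeds by induction on $r$, uses Malten's crossing theorem (Theorem~\ref{cross}) to guarantee that the iteration $\Xi_{\dc\s}$ terminates, and tracks valuations through the filtered rings $\CR^{\ge\l},\CR^{>\l}$ of \S\ref{R-sec} via Lemma~\ref{preserve} and Lemma~\ref{surj}. None of this machinery is avoidable by your normalisation.

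Your sketched direct argument also has a concrete error. For the inclusion $b_0\bT(\COk)H_0\subseteq[b]$ you propose to $\s$-conjugate each $g$ to $b_0$; but $b_0$ lies in $[\dc]$, and in general $[\dc]\neq[b]$ (the Newton point of $b_0$ is the $c\s$-average of $\nu(b)$, which is the central projection of $\nu(b)$, not $\nu(b)$ itself unless $\nu(b)$ is central). So the target of your conjugation is wrong whenever $J_{\nu(b)}\neq\emptyset$. This is precisely why the paper splits into the central case (handled by Lang's theorem, exactly as you would do) and the non-central case (handled by the induction plus the crossing algorithm, after using Lemma~\ref{shift} to arrange $\<\nu,\a_1\>\neq 0$). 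Your ``root-by-root iteration that converges'' is the right intuition, but its convergence is the content of Theorem~\ref{cross}, and controlling where the limit lands requires the bookkeeping of Lemma~\ref{preserve}; neither can be waved away. Likewise, for the reverse inclusion, Lemma~\ref{mu-eta} only sets up the computation by locating the relevant elements in an opposite cell; it does not by itself translate the global constraint $\nu(g)=\nu(b)$ into the coordinate-wise valuation conditions defining $H_0$.
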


Now we prove Theorem \ref{main'} using Theorem \ref{st-cross}.
\begin{proof}[Proof of Theorem \ref{main'}]
Let $\l \in X_*(\bT)_\BQ$ such that $\mu - \nu(b) = \l - c\s(\l)$. Choose $N \in \BZ_{\ge 1}$ such that $\l \in \frac{1}{N} X_*(\bT)$. Let $\brk_N = \brk[\varpi^{1/N}]$, and denote by $[b]_N$ the $\bG(\brk_N)$-$\s$-conjugacy class of $b$. Applying Theorem \ref{st-cross} to the field $\brk_N$ we have \[[b]_N \cap \dc \bU_c(\brk_N) = \dc H_{c, \mu, b, N},\] where $H_{c, \mu, b, N}$ is defined similarly as $H_{c, \mu, b}$ by replacing $\brk$ with $\brk_N$. Thus \[[b] \cap \dc \bU_c(\brk) \subseteq [b]_N \cap \dc \bU_c(\brk_N) \cap \dc \bU_c(\brk)  = \dc H_{c, \mu, b, N} \cap \dc \bU_c(\brk)  = \dc H_{c, \mu, b}.\] On the other hand, for any $x \in \dc H_{c, \mu, b} \subseteq [b]_N$ we have $\nu(x) = \nu(b)$ and $\k(x) = \k(\dc) = \k(b)$. So $x \in [b] \cap \dc \bU_c(\brk)$, and the equality $[b] \cap \dc \bU_c(\brk) = \dc H_{c, \mu, b}$ follows.
\end{proof}

\section{The translation case}
The aim of this section is to study the intersection $[b] \cap \dc\bT(\COk)\bU_c(\brk)$ when $b = \varpi^\chi$ for some $\chi \in X_*(\bT)$. Let $w_0 \in W$ be the longest element.

First we recall two statements which are essentially proved in Proposition (2.2) and Corollary (2.5) of \cite{L76} respectively.
\begin{proposition} \label{cox}
    We have

        (1) ${}^{w_0}\bU(\brk) \cap \bB(\brk) c \bB(\brk) = \bU_{-\a_1}(\brk^\times) \cdots \bU_{-\a_r}(\brk^\times)$;

        (2) if $g \in \bG(\brk)$ and $b \in \bT(\brk)$ satisfies that $g\i b \s(g) \in \bB(\brk) c \bB(\brk)$, then $g \in \bB(\brk) w_0 \bB(\brk)$.
\end{proposition}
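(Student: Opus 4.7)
The plan is to prove (1) using the rank-one identity in \S\ref{setup-subsec}(*), and then to combine (1) with Bruhat-cell bookkeeping for (2).

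For (1), the inclusion $\supseteq$ is by iterative application of (*). For $z_i \in \brk^\times$, rearranging (*) yields $\bU_{-\a_i}(z_i) = t_i \cdot \ts_{-\a_i} \cdot \bU_{-\a_i}(-z_i)\cdot\bU_{\a_i}(z_i^{-1})$ for a suitable $t_i \in \bT(\brk)$. Applying this to each factor in $\bU_{-\a_1}(z_1)\cdots\bU_{-\a_r}(z_r)$ and then sweeping the $\bU_{\a_i}$- and $\bT$-factors to the outside via the standard root-subgroup commutation relations, the product can be rewritten in the form $u \cdot \ts_{-\a_1}\cdots\ts_{-\a_r}\cdot t \cdot u'$ with $u,u' \in \bU(\brk)$, $t \in \bT(\brk)$. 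Since $s_{\a_1},\dots,s_{\a_r}$ are pairwise distinct simple reflections (drawn from distinct $\s$-orbits of the simple roots), their product $s_{-\a_1}\cdots s_{-\a_r} = s_{\a_1}\cdots s_{\a_r} = c$ has length $r$, so the expression lies in $\bB(\brk)\, c\, \bB(\brk)$. For the reverse inclusion $\subseteq$, view $\bU^-$ as the big open cell in $\bG/\bB$; then $\bU^- \cap \bB c \bB$ is a locally closed subvariety of dimension $\ell(c) = r$, and the irreducible $r$-dimensional constructible subset produced above must fill it.

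For (2), let $w \in W$ be such that $g \in \bB w \bB$, and factor $g = u\,\dw\,b_1$ with $u \in \bU \cap \dw\bU^-\dw^{-1}$, $b_1 \in \bB$, and $\dw$ a lift of $w$. Using that $b \in \bT$ normalizes $\bU$ (so $u^{-1} b \s(u) \in b\bU$), a direct computation gives
\[ g^{-1} b \s(g) \in \bB \cdot \dw^{-1} \bU\, \s(\dw) \cdot \bB = (\bB w^{-1}\bB)(\bB \s(w)\bB). \]
The right side decomposes by the Bruhat multiplication rule into a union $\bigsqcup_{y \in S(w)} \bB y \bB$ for an explicit set $S(w) \subseteq W$ depending on $w$. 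The hypothesis $g^{-1} b \s(g) \in \bB c \bB$ then forces $c \in S(w)$. A support-analysis argument---exploiting that $c$ is a minimal $\s$-Coxeter element involving exactly one simple reflection from each $\s$-orbit, combined with the $\s$-stability of $\bB$---then forces $w = w_0$: any shorter $w$ would leave at least one $\s$-orbit of simple reflections unreachable in $S(w)$.

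The main obstacle is the last step of (2): pinning down $S(w)$ precisely and extracting the sharp conclusion $w = w_0$ rather than a weaker length bound such as $\ell(w) \ge r$. This is where the combinatorics of the $\s$-Coxeter element $c$ (one representative per $\s$-orbit of simple roots) must be played off against the simultaneous appearance of $w^{-1}$ and $\s(w)$ in the Bruhat product, and the bookkeeping is delicate. By contrast, the computations in (1) are routine given the rank-one identity (*).
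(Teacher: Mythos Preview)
The paper does not prove this proposition: the sentence preceding it records that (1) and (2) are essentially Proposition~(2.2) and Corollary~(2.5) of \cite{L76}. So your proposal is being measured against an external reference, not against an argument the paper itself supplies.

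With that caveat, both halves of your attempt have real gaps.

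For (1), the inclusion $\supseteq$ is fine: iterating the identity~(*) does the job. But your dimension argument for $\subseteq$ fails. Knowing that $\bU^-(\brk)\cap\bB(\brk)c\bB(\brk)$ is locally closed of dimension $r=\ell(c)$ and contains the irreducible $r$-dimensional subset $\bU_{-\a_1}(\brk^\times)\cdots\bU_{-\a_r}(\brk^\times)$ does \emph{not} force equality---the model case $\BG_m\subsetneq\BA^1$ already shows that an irreducible constructible set need not ``fill'' an irreducible locally closed set of the same dimension containing it. Concretely, the left-hand side is (the $\brk$-points of) an open subset of the Schubert cell $\bB c\bB/\bB\cong\BA^r$, and nothing in your argument rules out this open set being strictly larger than $(\BG_m)^r$. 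What is actually needed is a direct inductive computation, peeling off one simple reflection at a time; this is exactly the mechanism behind Lemma~\ref{mu-eta} in the present paper, and is how Lusztig proceeds as well.

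For (2), you correctly identify the obstacle and do not overcome it. The inclusion $g^{-1}b\,\sigma(g)\in(\bB w^{-1}\bB)(\bB\,\sigma(w)\,\bB)$ is valid, but the condition that $c$ lie in the Bruhat support of this product is genuinely weaker than $w=w_0$; a support count over $\sigma$-orbits of simple reflections yields at best a length inequality. Lusztig's argument in \cite{L76} exploits the specific combinatorics of the $\sigma$-Coxeter element through an inductive mechanism rather than through a single Bruhat-product decomposition, and reproducing it requires substantially more than the sketch you give.
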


The next result is Proposition 8.9 of \cite{S}. See \cite{HL} and \cite{M} for generalizations.
\begin{theorem} \label{crossing}
The map $(x, y) \mapsto x\i y \s(x)$ induces a bijection \[ \Psi_\dc: \bU(\brk) \times \dc \bT(\COk)\bU_c(\brk) \overset \sim \to \bU(\brk) \dc \bT(\COk) \bU(\brk).\]
\end{theorem}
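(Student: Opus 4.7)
The plan is to verify well-definedness and then establish bijectivity directly, via a root-by-root analysis of the $\s$-twisted conjugation action of $\bU(\brk)$ on the cross-section.

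Well-definedness is immediate: since $\bU_c(\brk) \subseteq \bU(\brk)$ and $\bU$ is $\s$-stable, any $x\i y \s(x)$ with $x \in \bU(\brk)$ and $y \in \dc \bT(\COk) \bU_c(\brk)$ lies in $\bU(\brk) \cdot \dc \bT(\COk) \bU(\brk) \cdot \bU(\brk) = \bU(\brk) \dc \bT(\COk) \bU(\brk)$.

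For surjectivity, given $w = u \dc t v \in \bU(\brk) \dc \bT(\COk) \bU(\brk)$, I seek $x \in \bU(\brk)$ with $x w \s(x)\i \in \dc \bT(\COk) \bU_c(\brk)$. Substituting $y = xu$ reduces this to: given $t \in \bT(\COk)$ and $v \in \bU(\brk)$, find a unique $y \in \bU(\brk)$ such that $y \dc t v \s(y)\i \in \dc \bT(\COk) \bU_c(\brk)$. I would expand via the factorization $\bU = \bU_c \cdot \bU^c$, where $\bU^c = \prod_{\b \in \Phi^+ \setminus \s(\Phi_{c\s}^+)} \bU_\b$, and exploit the structural fact that $\Phi_{c\s}^+ = \Phi^+ \cap (c\s)\i(\Phi^-)$. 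Thus for each $\b \in \Phi^+ \setminus \s(\Phi_{c\s}^+)$ one has $(c\s)\i(\b) \in \Phi^+$. This contracting property allows an iterative elimination: choosing an ordering of $\Phi^+ \setminus \s(\Phi_{c\s}^+)$ compatible with the $(c\s)$-orbit heights, each root-group component of $v$ outside $\bU_c$ can be killed by a unique component of $y$, with the resulting $\varpi$-adic corrections flowing only into components strictly later in the ordering. Alternatively, one could appeal to Proposition \ref{cox}(1) to normalize via the decomposition ${}^{w_0}\bU \cap \bB c \bB = \bU_{-\a_1}(\brk^\times) \cdots \bU_{-\a_r}(\brk^\times)$, invoking Lemma \ref{mu-eta} to match parameters.

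For injectivity, if $\Psi_\dc(x_1, y_1) = \Psi_\dc(x_2, y_2)$, set $x = x_2 x_1\i \in \bU(\brk)$; then $y_2 = x y_1 \s(x)\i$ with $y_i \in \dc \bT(\COk) \bU_c(\brk)$. Writing $y_i = \dc t_i v_i$, the equation becomes $\dc t_2 v_2 = x \dc t_1 v_1 \s(x)\i$. The same iterative comparison used for surjectivity — matched component by component in the root-group factorization of both sides — forces every factor of $x$ to vanish, so $x = 1$ and $y_1 = y_2$.

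The main obstacle will be making the iterative elimination precise in the loop group setting and verifying that the induced system of equations in the root-group coordinates has a unique $\brk$-solution. This requires careful bookkeeping of the valuation shifts induced by conjugation by $\dc = \varpi^\mu \ts_{\a_1} \cdots \ts_{\a_r}$, which scale the coefficient on $\bU_\b$ by $\varpi^{-\<\mu, \b\>}$. The Coxeter minimality of $c$, together with the $(c\s)$-orbit structure on $\Phi^+ \setminus \s(\Phi_{c\s}^+)$, is essential both for the termination of the iteration at each $\varpi$-adic depth and for the uniqueness of the solution — this is precisely the loop-group analog of the argument behind \cite[Proposition 8.9]{S} and its generalizations in \cite{HL} and \cite{M}.
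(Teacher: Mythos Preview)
The paper does not give its own proof of this statement; it simply cites Steinberg \cite[Proposition~8.9]{S} (for the untwisted prototype) and He--Lusztig \cite{HL} and Malten \cite{M} for the $\s$-twisted and more general versions needed here. Your sketch is exactly the strategy used in those references: exploit that $(c\s)$ maps $\Phi^+ \setminus \Phi_{c\s}^+$ back into $\Phi^+$ to set up an iterative root-by-root elimination, and then argue termination. So there is no genuine difference in approach to report.

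Two comments on the details. First, your termination concern is the real one, and it is \emph{not} merely a matter of $\varpi$-adic bookkeeping: even over an algebraically closed field the iteration on root groups is not a priori finite, because commutators regenerate lower roots. The correct termination argument is the combinatorial ``crossing'' statement $\Cross_{c\s}^d(\Phi^+) = \emptyset$ for $d \gg 0$, which the present paper later quotes as Theorem~\ref{cross} from \cite{M}; this is what makes the induction in \cite{HL} and \cite{M} close, and it is independent of the valuation structure. Once one has this, the bijection follows over any field (or indeed any $\brk$-algebra), and no separate $\varpi$-adic convergence analysis is required. Second, your alternative suggestion of routing through Proposition~\ref{cox}(1) and Lemma~\ref{mu-eta} does not lead to a proof of the bijection: those results compute a specific intersection inside $\bU(\brk)\dc\bT(\COk)\bU(\brk)$ and already presuppose the Bruhat-type structure that Theorem~\ref{crossing} refines; they do not furnish an inverse to $\Psi_{\dc}$.
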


\begin{lemma} \label{unique}
    Let $\mu, \chi \in X_*(\bT)$ such that $\k(\varpi^\mu) = \k(\varpi^\chi)$. Then there exists a unique cocharacter $\eta \in X_*(\bT)$ such that $\mu - \eta \in \sum_{i=1}^r \BZ \a_i^\vee$ and $\chi - \eta \in (1-\s) X_*(\bT)$.
\end{lemma}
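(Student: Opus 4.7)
The plan is to translate the Kottwitz condition into a statement inside $X_*(\bT)$, and then to split the lemma into independent existence and uniqueness arguments, each reduced to a short lattice computation.

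First I would unfold what $\k(\varpi^\mu)=\k(\varpi^\chi)$ means: the Kottwitz map is the composite $X_*(\bT)\twoheadrightarrow X_*(\bT)/\BZ\Phi^\vee\twoheadrightarrow \bigl(X_*(\bT)/\BZ\Phi^\vee\bigr)_\s=\pi_1(\bG)_\s$, so the hypothesis is simply
\[
\mu-\chi\;\in\;\BZ\Phi^\vee+(1-\s)X_*(\bT).
\]
Next I would replace $\BZ\Phi^\vee$ by $\sum_{i=1}^r\BZ\a_i^\vee$ modulo $(1-\s)X_*(\bT)$. Every simple root is of the form $\s^k(\a_i)$ for a unique $i$ and some $k\ge 0$, and the identity $1-\s^k=(1-\s)(1+\s+\cdots+\s^{k-1})$ gives $\s^k(\a_i^\vee)-\a_i^\vee\in(1-\s)X_*(\bT)$. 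Hence $\BZ\Phi^\vee\subseteq\sum_i\BZ\a_i^\vee+(1-\s)X_*(\bT)$, so one can write
\[
\mu-\chi=\sum_{i=1}^r n_i\a_i^\vee+(1-\s)v
\]
for some $n_i\in\BZ$ and $v\in X_*(\bT)$. Setting $\eta=\mu-\sum_i n_i\a_i^\vee$ then simultaneously realises both congruences, giving existence.

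For uniqueness, any difference $\eta_1-\eta_2$ of two solutions lies in the intersection $L:=\bigl(\sum_i\BZ\a_i^\vee\bigr)\cap(1-\s)X_*(\bT)$, and I would show $L=0$. The key point is that the element $\o_i\in X^*(\bT)_\BQ$ defined in the paper is $\s$-invariant (because it is a sum over an entire $\s$-orbit of fundamental weights and $\s\o_\a=\o_{\s(\a)}$), and by the defining duality between fundamental weights and simple coroots one has $\<\o_j,\a_i^\vee\>=\d_{ij}$ since distinct $\a_i,\a_j$ lie in distinct $\s$-orbits. If $\sum_i n_i\a_i^\vee=(1-\s)v$, pairing both sides with $\o_j$ gives $n_j$ on the left and $\<\o_j-\s\i\o_j,v\>=0$ on the right, so all $n_j$ vanish and $L=0$.

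I do not expect any serious obstacle: the uniqueness step is the only nontrivial point, and once one identifies $\o_j$ as the correct $\s$-invariant dual vectors to the chosen $\a_i^\vee$ it is a one-line pairing computation. The only minor subtlety is to verify the inclusion $\BZ\Phi^\vee\subseteq\sum_i\BZ\a_i^\vee+(1-\s)X_*(\bT)$ carefully, since this is what allows one to pass from the coarser lattice $\BZ\Phi^\vee$ (appearing in $\pi_1(\bG)$) to the strictly smaller lattice $\sum_i\BZ\a_i^\vee$ required by the statement.
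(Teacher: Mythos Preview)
Your proposal is correct and amounts to a fully unpacked version of the paper's one-line proof, which simply invokes the natural isomorphism $\sum_{i=1}^r \BZ\a_i^\vee \cong \BZ\Phi^\vee/((1-\s)\BZ\Phi^\vee)$. Your existence argument is exactly the surjectivity of this map, and your uniqueness argument via pairing with the $\s$-invariant weights $\o_j$ is precisely how one verifies injectivity; in fact your pairing computation is slightly sharper, since it directly gives $\bigl(\sum_i\BZ\a_i^\vee\bigr)\cap(1-\s)X_*(\bT)=0$ rather than only the intersection with $(1-\s)\BZ\Phi^\vee$, which is what the lemma actually needs.
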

\begin{proof}
    The statement follows from the natural isomorphism $\sum_{i=1}^r \BZ \a_i^\vee \cong \BZ\Phi^\vee / ((1-\s)\BZ\Phi^\vee)$.
\end{proof}

\begin{proposition} \label{red}
    Let $b = \varpi^\chi$ for some $\chi \in X_*(\bT)$ and let $\dc \in \varpi^\mu \bN(\COk)$ be a lift of $c$ for some $\mu \in X_*(\bT)$. Then $[b] \cap \dc \bU_c(\brk) \neq \emptyset$ if and only if $\k(\dc) = \k(b)$. In this case, \[\tag{a}[b] \cap \dc \bT(\COk) \bU_c(\brk) = \pr_2 \Psi_{\dc}\i(\varpi^\eta \bT(\COk) K_{\mu-\eta}),\] where $\pr_2: \bU(\brk) \times \dc \bT(\COk) \bU_c(\brk) \to \dc \bT(\COk) \bU_c(\brk)$ is the natural projection and $\eta \in X_*(\bT)$ is the unique cocharacter such that $\mu - \eta \in \sum_{i=1}^r \BZ \a_i^\vee$ and $\chi - \eta \in (1-\s)(X_*(\bT))$ as in Lemma \ref{unique}.
\end{proposition}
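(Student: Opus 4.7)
The proof of Proposition \ref{red} will combine the parametrization of the big cell $\bU(\brk) \dc \bT(\COk) \bU(\brk)$ given by Theorem \ref{crossing}, the Bruhat-cell restrictions from Proposition \ref{cox}, and the explicit intersection formula of Lemma \ref{mu-eta}. The non-emptiness criterion will be a direct consequence of formula (a): the forward direction is the Kottwitz invariance of $\sigma$-conjugacy classes, while the reverse direction follows by observing that $K_{\mu-\eta}$ is a non-empty product of unit disks, that $\varpi^\eta \bT(\COk) K_{\mu-\eta}$ lies in the image of $\Psi_\dc$ by Lemma \ref{mu-eta}, and hence $\pr_2 \Psi_\dc^{-1}(\varpi^\eta \bT(\COk) K_{\mu-\eta})$ is non-empty.

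For formula (a), my plan is first to reduce to the case $b = \varpi^\eta$: writing $\chi - \eta = \xi - \sigma(\xi)$ with $\xi \in X_*(\bT)$ (as permitted by Lemma \ref{unique}), the element $\varpi^{-\xi}$ $\sigma$-conjugates $\varpi^\eta$ to $\varpi^\chi$, so $[b] = [\varpi^\eta]$. Via the bijection $\Psi_\dc$, formula (a) is then equivalent to the $\bU(\brk)$-$\sigma$-orbit identification
\begin{equation*}
[\varpi^\eta] \cap \bU(\brk)\dc\bT(\COk)\bU(\brk) = \{g^{-1}z\sigma(g) : g \in \bU(\brk),\ z \in \varpi^\eta\bT(\COk) K_{\mu-\eta}\}.
\end{equation*}
The inclusion $\supseteq$ reduces to showing $\varpi^\eta\bT(\COk) K_{\mu-\eta} \subseteq [\varpi^\eta]$. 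For $z = \varpi^\eta t k$ on the left, I would solve the $\sigma$-conjugation equation $(us)^{-1}z\sigma(us) = \varpi^\eta$ for $s \in \bT(\COk)$ and $u \in \bU^-(\brk)$ by a Lang--Steinberg-type iterative argument, using the surjectivity of Lang's map on $\bT(\COk)$ and on each root subgroup together with the fact that $k$ has components only along the specified representatives $-\a_i$ with prescribed valuations.

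The reverse inclusion $\subseteq$ is the principal technical step. Given $y \in [\varpi^\eta] \cap \dc\bT(\COk)\bU_c(\brk)$, write $y = h^{-1}\varpi^\eta\sigma(h)$. Since $y \in \bB(\brk) c \bB(\brk)$, Proposition \ref{cox}(2) forces $h \in \bB(\brk) w_0 \bB(\brk)$. Decomposing $h = u_0 \dot w_0 t_1 u_1$ uniquely and setting $g = u_1^{-1}$, a direct computation (using $\dot w_0^{-1}\bU(\brk)\dot w_0 = \bU^-(\brk)$ and $\dot w_0^{-1}\sigma(\dot w_0) \in \bT(\COk)$) places $g^{-1}y\sigma(g)$ into $\varpi^{w_0\eta}\bT(\brk)\bU^-(\brk)$. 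Combining with the constraint $g^{-1}y\sigma(g) \in \bU(\brk)\dc\bT(\COk)\bU(\brk)$ and applying Proposition \ref{cox}(1) together with Lemma \ref{mu-eta}, I would then restrict the $\bU^-$-component to a product of $\bU_{-\a_i}$-factors with the prescribed valuations, and the torus component to $\varpi^\eta\bT(\COk)$, yielding $\varpi^\eta\bT(\COk) K_{\mu-\eta}$.

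The main obstacle will be reconciling the apparent $w_0\eta$-shift in the torus component of $g^{-1}y\sigma(g)$ with the target $\varpi^\eta\bT(\COk)$: the Bruhat cell $\bU(\brk)\dc\bT(\COk)\bU(\brk)$ carries torus weight $\varpi^\mu$, and the characterization $\mu - \eta \in \sum \BZ \a_i^\vee$ from Lemma \ref{unique} must be used carefully---likely via an iterative rewriting along the factors $\ts_{\a_i}$ that mirrors the proof of Lemma \ref{mu-eta}---to identify the torus part with $\varpi^\eta$ modulo $\bT(\COk)$.
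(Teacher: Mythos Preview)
Your overall architecture is right and matches the paper: use $\Psi_{\dc}$ to move the problem to $\bU(\brk)\dc\bT(\COk)\bU(\brk)$, invoke Proposition~\ref{cox}(2) to land in the big Bruhat cell, and finish with Proposition~\ref{cox}(1) and Lemma~\ref{mu-eta}. The $\supseteq$ direction via a Lang--Steinberg argument on $\bT(\COk)$ and $\bU^-(\brk)$ is fine.

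The gap is exactly the obstacle you flag, and your proposed resolution does not close it. After your Bruhat decomposition $h = u_0\dw_0 t_1 u_1$ and setting $g = u_1^{-1}$, the element $z = g^{-1}y\sigma(g)$ lies in $\varpi^{\eta''}\bT(\COk)\bU^-(\brk)$ with $\eta'' \equiv w_0(\eta) \pmod{(1-\sigma)X_*(\bT)}$, because conjugation of $\varpi^\eta$ by a lift of $w_0$ produces $\varpi^{w_0(\eta)}$. Lemma~\ref{mu-eta} will then force $\mu - \eta'' \in \sum_i \BZ\a_i^\vee$, but to conclude $\eta'' = \eta$ via Lemma~\ref{unique} you would also need $\eta - \eta'' \in (1-\sigma)X_*(\bT)$, i.e.\ $\eta - w_0(\eta) \in (1-\sigma)X_*(\bT)$. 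This fails already for split groups of rank one with $\eta$ noncentral, since there $(1-\sigma)X_*(\bT)=0$ while $\eta - w_0(\eta) = \langle\eta,\a\rangle\a^\vee \neq 0$. No amount of rewriting along the $\ts_{\a_i}$ can repair a cocharacter-lattice mismatch of this kind.

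The paper's fix is a one-line trick: instead of $\sigma$-conjugating $\varpi^\eta$, work with the representative $b' = \varpi^{w_0(\chi)} \in [b]$. Then the $w_0$ coming from the Bruhat decomposition cancels against the one already present in $w_0(\chi)$, and the computation lands directly in $\varpi^{\eta'}\bT(\COk)\bU^-(\brk)$ with $\eta' \in \chi + (1-\sigma)X_*(\bT)$. Lemma~\ref{mu-eta} then forces $\mu - \eta' \in \sum_i\BZ\a_i^\vee$, and Lemma~\ref{unique} gives $\eta' = \eta$ as desired.
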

\begin{proof}
    As $[b] \cap \dc \bU_c(\brk) \neq \emptyset$ implies that $\k(b) = \k(\varpi^\chi)$, it suffices to deal with the case that $\k(\varpi^\chi) = \k(b) = \k(\dc) = \k(\varpi^\mu)$. Let $b' = \varpi^{w_0(\chi)}$, where $w_0 = \s(w_0)$ is the longest element of $W$. Then $[b'] = [b]$.

    Let $g \in \bG(\brk)$ such that $g\i b' \s(g) \in \dc \bT(\COk) \bU_c(\brk)$. By Proposition \ref{cox} (2), $g \in \bB(\brk) w_0 \bB(\brk)$. Write $g = u \dw_0 u'$ for some $u, u' \in \bU(\brk)$ and some lift $\dw_0 \in \bN(\brk)$ of $w_0$. Then \[h := \dw_0\i u\i b' \s(u) \s(\dw_0) \in \bU(\brk) \dc \bT(\COk) \bU(\brk).\] As $b' = \varpi^{w_0(\chi)}$ and $\s(w_0) = w_0$, it follows that $h \in \varpi^\eta \bT(\COk) {}^{w_0} \bU(\brk)$ for some $\eta \in \chi + (1 - \s)X_*(\bT)$. By Proposition \ref{cox} (1) we have \[h \in \varpi^\eta \bT(\COk) \bU_{-\a_1}(\brk^\times) \cdots \bU_{-\a_r}(\brk^\times).\] By Lemma \ref{mu-eta} we have $\mu - \eta \in \sum_{i=1}^r \BZ \a_i^\vee$ and $h \in \varpi^\eta \bT(\COk) K_{\mu - \eta}$. In particular, $\eta$ is uniquely determined as in Lemma \ref{unique}.

    As $u' \in \bU(\brk)$ and $g\i b' \s(g) = {u'}\i h \s(u')$, it follows by definition that $\pr_2 \Psi_{\dc}\i(h) = g\i b' \s(g) \in \dc \bT(\COk) \bU(\brk)$ and hence \[[b] \cap \dc \bT(\COk) \bU_c(\brk) \subseteq  \pr_2 \Psi_{\dc}\i(\varpi^\eta \bT(\COk) K_{\mu-\eta}).\]

    Conversely, by Lemma \ref{mu-eta} and that $\varpi^\mu \bT(\COk) \bU(\brk) \subseteq [\varpi^\eta] = [b]$ we have \[\varpi^\eta \bT(\COk) K_{\mu-\eta} \subseteq [\varpi^\eta] \cap \bU(\brk) \dc \bT(\COk) \bU(\brk)  = [b] \cap  \bU(\brk) \dc \bT(\COk) \bU(\brk).\] So $[b] \cap \dc \bT(\COk) \bU_c(\brk) \supseteq  \pr_2 \Psi_{\dc}\i(\varpi^\eta \bT(\COk) K_{\mu-\eta})$ and (a) follows.
\end{proof}

\section{A digression}
In this section, we digress to introduce a method to compute the right hand side of Proposition \ref{red} (a). For simplicity we assume that $k = \BF_q((\varpi))$, where $\BF_q$ is a finite field with $q$ elements.

\subsection{} \label{R-sec}
Let $\l \in X_*(\bT)$ and $J \subseteq \{1, \dots, r\}$. By definition, each point $x = (x_i^l)_{i, l}$ in $H_{c\s, J}^\l$ can be uniquely written as \[ x = \prod_{i=1}^r \bU_{\b_i}(\varpi^{\<\l, \b_i\>} \sum_{l=0}^\infty x_i^l \varpi^l),\] where $x_i^l \in \overline \BF_q^\times$ if $l=0$ and $i \in J$, and $x_i^l \in \overline \BF_q$ otherwise. Let $(X_i^l)_{i, l}$ be the coordinates of the points $x = (x_i^l)_{i, l} \in H_{c\s, J}^\l$. Set $\CR = R((\varpi))$, where \[R = \overline \BF_q[X_i^l; l \in \BZ_{\ge 0}, \ 1 \le i \le r] [(X_j^0)\i; j \in J].\]

Let $\e \in \BR$. Let $\CR^{\ge \e}$ be the linear space of elements $\sum_l f^l \varpi^l \in \CR$ such that
\begin{itemize}
    \item $f^l = 0$ if $l < \e$;

    \item $f^l \in \overline \BF_q[X_i^{l'}; 0 \le l' \le l-\e, 1 \le i \le r][(X_j^0)\i; j \in J]$ for $l \ge \e$.
\end{itemize}
 Similarly, let $\CR^{> \e}$ be the linear space of elements $\sum_l f^l \varpi^l$ such that
 \begin{itemize}
     \item $f^l = 0$ if $k \le \e$;

     \item $f^l \in \overline \BF_q[X_i^{l'}; 0 \le l' < l-\e, 1 \le i \le r][(X_j^0)\i; j \in J]$ for $l > \e$.
 \end{itemize}

The following results are immediate by definition.
 \begin{lemma} \label{CR}
     We have the following properties.

     (1) $\CR^{\ge \e} \CR^{\ge \e'} \subseteq \CR^{\ge \e + \e'}$ and $\CR^{\ge \e} \CR^{> \e'} \subseteq \CR^{> \e + \e'}$ for $\e, \e' \in \BR$.

     (2) $d \CR^{\ge \e} = \CR^{\ge \e+m}$ and $d \CR^{>\e} = \CR^{> \e+m}$ for $d \in \varpi^m \CO_\brk^\times$ with $m \in \BZ$.

     (3)  $f_j\i \in \CR^{\ge -\<\l, \b_j\>}$ for $j \in J$, where $f_j = \varpi^{\<\l, \b_j\>} \sum_{l=0}^\infty X_j^l \varpi^l \in \CR^{\ge \<\l, \b_j\>}$.
\end{lemma}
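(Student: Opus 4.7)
The plan is to prove each of the three parts by unwinding the definitions of $\CR^{\ge \e}$ and $\CR^{>\e}$, which impose paired conditions on the $\varpi$-valuation of the formal Laurent series and on the polynomial degree (in the variables $X_i^{l'}$) of each $\varpi$-coefficient. The main task is simply to verify that these two constraints interact additively under multiplication, shift coherently under scaling, and behave well under inversion. Since the assertions are framed as ``immediate by definition,'' I expect no genuine obstacle; the entire proof is elementary book-keeping, with the only mildly non-trivial point being the geometric-series expansion in part (3).

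For (1), I would write $f = \sum_l f^l \varpi^l \in \CR^{\ge \e}$ and $g = \sum_l g^l \varpi^l \in \CR^{\ge \e'}$ and expand $fg = \sum_L \bigl(\sum_{l + l' = L} f^l g^{l'}\bigr)\varpi^L$. First, vanishing below the threshold: if $L < \e + \e'$, any pair $(l,l')$ with $l + l' = L$ forces $l < \e$ or $l' < \e'$, hence $f^l g^{l'} = 0$. Second, the polynomial-degree bound: for a non-zero $f^l g^{l'}$ with $l \ge \e$ and $l' \ge \e'$, the variables that can appear have index $l'' \le \max(l - \e,\, l' - \e') \le (l - \e) + (l' - \e') = L - (\e + \e')$. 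The strict version $\CR^{\ge \e}\cdot\CR^{>\e'} \subseteq \CR^{>\e + \e'}$ is identical: the hypothesis $l' > \e'$ upgrades each of the weak inequalities to the required strict ones.

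For (2), I would write $d = \varpi^m u$ with $u = \sum_{l \ge 0} u^l \varpi^l \in \CO_\brk^\times$, whose coefficients $u^l \in \overline{\BF}_q$ are scalars carrying no $X_i^l$-data. Multiplication by $\varpi^m$ is a uniform shift of $\varpi$-valuation, and convolution by $u$ preserves the polynomial-degree constraint exactly, giving $d\CR^{\ge \e} \subseteq \CR^{\ge \e + m}$ (and likewise for $\CR^{>\e}$). Applying the same argument to $d^{-1} = \varpi^{-m} u^{-1} \in \varpi^{-m}\CO_\brk^\times$ yields the reverse inclusion, so each inclusion is actually an equality.

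For (3), I would factor $f_j = \varpi^{\<\l, \b_j\>} X_j^0 \bigl(1 + h_j\bigr)$ with $h_j = (X_j^0)^{-1}\sum_{l \ge 1} X_j^l \varpi^l$, which has strictly positive $\varpi$-valuation. Since $j \in J$ we have $(X_j^0)^{-1} \in R$, so the geometric series $f_j^{-1} = \varpi^{-\<\l, \b_j\>} (X_j^0)^{-1} \sum_{k \ge 0} (-h_j)^k$ converges $\varpi$-adically. Term-by-term inspection shows that the coefficient of $\varpi^m$ in $f_j^{-1}$ is a polynomial in $(X_j^0)^{-1}$ and in $X_j^{l'}$ for $l' \le m + \<\l, \b_j\>$, which is precisely the defining condition for membership in $\CR^{\ge -\<\l, \b_j\>}$. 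Combining this with part (2) then gives the claimed inclusion.
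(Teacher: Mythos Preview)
Your proof is correct and is exactly the elementary bookkeeping the paper has in mind; the paper itself gives no argument beyond ``immediate by definition,'' so there is nothing to compare. One cosmetic point: in part (3) the factor $(X_j^0)^{-1}$ is not literally in $\varpi^m\CO_\brk^\times$, so invoking part (2) for it is slightly imprecise---but since $(X_j^0)^{-1}$ is a degree-zero unit of $R$ it obviously preserves every $\CR^{\ge\e}$, and your direct coefficient check already establishes the membership in $\CR^{\ge -\<\l,\b_j\>}$ without needing (2) at all.
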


We set $\bU_\a(\CR)^{\ge \l} = \bU_\a(\CR^{\ge \<\l, \a\>})$ for $\a \in \Phi$ . It follows from Lemma \ref{CR} that \[ [\bU_\a(\CR)^{\ge \l}, \bU_\b(\CR)^{\ge \l}] \subseteq \prod_{i, j \ge 1} \bU_{i\a + j\b}(\CR)^{\ge \l} \ \text{ for } -\a \neq \b \in \Phi.\] So for $w \in W$ we can define the following subgroup \[ \bU_{w\s}(\CR)^{\ge \l} = \prod_{\g \in \Phi_{w\s}^+} \bU_\g(\CR)^{\ge \l} \subseteq \bU_{w\s}(\CR).\] Moreover, we can define $\bU_\g(\CR)^{>\l}$ and $\bU_{w\s}(\CR)^{>\l}$ in a similarly way. Note that $\bU_{w\s}(\CR)^{>\l}$ is a normal subgroup of $\bU_{w\s}(\CR)^{\ge \l}$.

Note that each element of $\bU(\CR)$ defines a map from $H_{c\s, J}^\l$ to $\bU(\brk)$ in a natural way.
\begin{lemma} \label{surj}
    Let $g \in h \bU_{c\s}(\CR)^{>\l}$ with $h = \prod_{i=1}^r \bU_{\b_i}(f_i)$, where \[f_i = \varpi^{\<\l, \b_i\>} \sum_{l=0}^\infty X_i^l \varpi^l \text{ for } 1 \le i \le r.\]  Then $g$ induces a surjective endomorphism of $H_{c\s, J}^\l$.
\end{lemma}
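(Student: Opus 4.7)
The plan is to use the commutativity of the root subgroups $\bU_{\b_i}$ provided by Lemma \ref{commute} to reduce the statement to $r$ independent scalar equations, and then to solve each of them by an induction on the $\varpi$-adic level.

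First, since $\bU_{c\s}(\CR)^{>\l} = \prod_{i=1}^r \bU_{\b_i}(\CR^{>\<\l, \b_i\>})$ and the $\bU_{\b_i}$'s are mutually commuting copies of $\BG_a$, I would write
\begin{equation*}
  g \;=\; \prod_{i=1}^r \bU_{\b_i}(g_i), \qquad g_i = f_i + u_i,\quad u_i \in \CR^{>\<\l, \b_i\>}.
\end{equation*}
The induced map on $H_{c\s, J}^\l$ then splits factor by factor into the evaluations $x \mapsto \bU_{\b_i}(g_i(x))$. For $i \in J$, the summand $u_i$ contributes only terms of $\varpi$-valuation strictly greater than $\<\l, \b_i\>$, so the leading coefficient of $g_i(x)$ equals $x_i^0 \in \overline \BF_q^\times$; for $i \notin J$, both $f_i(x)$ and $u_i(x)$ automatically lie in the required $\varpi$-adic disk. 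Hence $g$ does induce an endomorphism of $H_{c\s, J}^\l$.

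For surjectivity I would in fact prove bijectivity, by solving $g_i(x) = y_i$ for each $i$ separately, where $y_i$ is the $\bU_{\b_i}$-component of a prescribed target $y \in H_{c\s, J}^\l$. Writing $u_i = \sum_{l \ge 1} v_i^l \, \varpi^{\<\l, \b_i\>+l}$, the definition of $\CR^{> \<\l, \b_i\>}$ in \S\ref{R-sec} forces each coefficient $v_i^l$ to lie in $\overline \BF_q[X_{i'}^{l'};\, l' < l][(X_j^0)\i;\, j \in J]$. Comparing coefficients of $\varpi^{\<\l, \b_i\>+l}$ in $g_i(x) = y_i$ therefore yields the strictly triangular system
\begin{equation*}
  x_i^0 = y_i^0, \qquad x_i^l = y_i^l - v_i^l\bigl(x_{i'}^{l'},\ l' < l;\ (x_j^0)\i,\ j \in J\bigr) \text{ for } l \ge 1,
\end{equation*}
which is uniquely solved by induction on $l$. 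The inverses $(x_j^0)\i$ for $j \in J$ are well-defined because $y \in H_{c\s, J}^\l$ forces $x_j^0 = y_j^0 \in \overline \BF_q^\times$.

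The only substantive point, and the only place where the bookkeeping of the ring $\CR$ actually enters, is the triangular dependence of $v_i^l$ on the variables $X_{i'}^{l'}$ for $l' < l$; but this is precisely what the definition of $\CR^{> \<\l, \b_i\>}$ is designed to encode, so once it is invoked the induction carries through mechanically.
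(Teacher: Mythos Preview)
Your proof is correct and follows essentially the same route as the paper: you use Lemma \ref{commute} to write $g$ coordinatewise as $\prod_i \bU_{\b_i}(g_i)$ with $g_i = f_i + u_i$, read off from the definition of $\CR^{>\<\l,\b_i\>}$ that the $l$-th coefficient of $u_i$ depends only on variables $X_{i'}^{l'}$ with $l' < l$, and solve the resulting triangular system by induction on the level $l$. The paper does exactly this (it writes $g_i^l = X_i^l + \d_i^l$ with $\d_i^l$ involving only lower-level variables, and then recursively sets $x_i^l = y_i^l - \d_i^l((x_{i'}^{l'})_{l'<l})$); your added remark that the recursion in fact shows bijectivity is also implicit in the paper's construction.
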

\begin{proof}
    Write $g = \prod_{i=1}^r \bU_{\b_i}(g_i)$ with $g_i = \varpi^{\<\l, \b_i\>} \sum_{l=0}^\infty g_i^l \varpi^l \in \CR$. By assumption and Lemma \ref{commute} we have

    (a) $g_i^0 = X_i^0$;

    (b) $g_i^l = X_i^l + \d_i^l$ with $\d_i^l \in \overline \BF_q[X_{i'}^{l'}; 0 \le l' < l, \ 1 \le i' \le r][(X_j^0)\i; j \in J]$.
    By (a) we have $g(H_{c\s, J}^\l) \subseteq H_{c\s, J}^\l$. It remains to show that $g(H_{c\s, J}^\l) = H_{c\s, J}^\l$.

    Let $y = (y_i^l) \in H_{c\s, J}^\l$. We construct a point $x = (x_i^l) \in H_{c\s, J}^\l$ inductively as follows. If $l = 0$, we set $x_i^l = y_i^l$. Suppose that the point $(x_i^{l'})_{l' < l}$ is already constructed. In view of (b) we set \[x_i^l = y_i^l - \d_i^l((x_i^{l'})_{l' < l})\] and this finishes the construction of $x$. It follows from (a) and (b) that $y_i^l = g_i^l(x)$ and hence $g(x) = y$. So $g(H_{c\s, J}^\l) = H_{c\s, J}^\l$ as desired.
\end{proof}

\subsection{} \label{cross-subsec}
Let $\g \in \Phi^+$. Following \cite{M} we define \[\Cross_{c\s}(\g) = \Phi^+ \cap c\s(\BZ_{\ge 1} \g + \BZ_{\ge 0} \Phi_{c\s}^+).\] For $\G \subseteq \Phi^+$ we set $\Cross_{c\s}(\G) = \cup_{\g \in \G} \Cross_{c\s}(\g)$.

The following result is proved in Theorem B and Lemma 2.37 of \cite{M}.
\begin{theorem} \label{cross}
    We have $\Cross_{\dc\s}^d(\Phi^+) = \emptyset$ for $d \gg 0$. Here $\Cross_{\dc\s}^d$ denotes the $d$-fold composition of $\Cross_{\dc\s}$.
\end{theorem}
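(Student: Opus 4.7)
The result is \cite[Theorem~B, Lemma~2.37]{M}; the plan is to follow that approach, carried out in two steps.

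\emph{Step 1 (unfolding the recursion).} By induction on $d$, I would first establish that every $\a \in \Cross_{c\s}^d(\g)$ admits a presentation
\[
\a \;=\; N\,(c\s)^d(\g) \;+\; \sum_{k=1}^{d} (c\s)^k(\x_k),
\]
with an integer $N \ge 1$ and $\x_k \in \BZ_{\ge 0}\,\Phi_{c\s}^+$, arranged so that each partial sum $N(c\s)^d(\g) + \sum_{k=j+1}^{d}(c\s)^k(\x_k)$ is a positive root for $0 \le j \le d$. The inductive step is a direct computation: if $\a' = c\s\bigl(M\a + \sum_i m_i \b_i\bigr) \in \Cross_{c\s}(\a)$ with $M \ge 1$, substituting the expansion for $\a$ and distributing $c\s$ gives the analogous expansion for $\a'$ at depth $d+1$.

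\emph{Step 2 (termination).} Since $c$ is a $\s$-Coxeter element, Lemma~\ref{eq} gives that $1-c\s$ is invertible on $V := \BQ\Phi^\vee$. Equivalently, $c\s$ acts on $V$ as an isometry of finite order $h$ without fixed vectors; its complex eigenvalues are nontrivial roots of unity, and $V \otimes_\BQ \BR$ contains a two-dimensional $c\s$-invariant ``Coxeter plane'' on which $c\s$ acts by a definite rotation. I would then construct a non-negative integer-valued function $\d : \Phi^+ \to \BZ_{\ge 0}$ from the projection of $\Phi^+$ onto this plane, and prove $\d(\a') < \d(\a)$ whenever $\a' \in \Cross_{c\s}(\a)$. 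Iterating, $\Cross_{c\s}^d(\Phi^+) = \emptyset$ for all $d > \max_{\a \in \Phi^+} \d(\a)$, which is the theorem.

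\emph{Main obstacle.} The delicate step is Step~2: because $c\s$ acts as a rotation, the terms $(c\s)^k(\b_i)$ oscillate in both sign and magnitude, so one cannot majorize $\a$ term by term using a naive height function. The resolution, which is the technical core of \cite[Lem.~2.37]{M}, is to choose the linear functional defining $\d$ along a specific real $c\s$-stable direction in the Coxeter plane, so that---after combining it with its image under a suitable power of $c\s$---the oscillation is smoothed out and one extracts a strict decrease along $\Cross_{c\s}$. Well-foundedness of the resulting $\d$ then yields termination.
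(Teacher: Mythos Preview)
The paper does not prove this statement at all; it simply cites \cite[Theorem~B, Lemma~2.37]{M}. Your proposal opens with the same citation, so at the level of the paper you match it exactly.

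Your sketch of the argument is additional content that the paper does not supply. Step~1 (the inductive unfolding of $\Cross_{c\s}^d$) is correct and routine. Step~2 is where I would push back: the ``Coxeter plane'' heuristic is tailored to untwisted Coxeter elements, whereas here $c$ is only a $\s$-Coxeter element (one simple reflection per $\s$-orbit of $\Pi$), and the spectral decomposition of $c\s$ on $\BR\Phi^\vee$ need not contain a canonical plane with the properties you invoke. More seriously, you never actually construct the function $\d$, nor explain why it takes values in $\BZ_{\ge 0}$ rather than $\BR$; you only assert that such a $\d$ exists and that the oscillation can be ``smoothed out.'' As written this is a restatement of what must be proved, not a proof. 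You are right to flag it as the delicate point, but be aware that Malten's actual argument is organized differently (more combinatorially, via reduced-expression/braid considerations) and does not proceed through a Coxeter-plane projection in the way you describe. Since the paper itself offers nothing beyond the citation, citing \cite{M} is already sufficient for the purposes of this paper; if you want to include a genuine sketch, it should track \cite{M} more closely.
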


Let $\CR$ be the $\brk$-algebra as in \S \ref{R-sec}. For a lift $\dc \in \bN(\brk)$ of $c$ we define \[\Xi_{\dc\s}: \bU(\CR) \times \bU_{c\s}(\CR) \to \bU(\CR) \times \bU_{c\s}(\CR), \ (x, y) \mapsto (x', y')\] such that $x' \dc\s y' = \dc\s y x$.
\begin{corollary} \label{conj}
      The image of $\pr_1 \circ \Xi_{\dc\s}^d$ is trivial for $d \gg 0$. Here $\Xi_{\dc\s}^d$ is the $d$-fold composition of $\Xi_{\dc\s}$, and $\pr_1: \bU(\CR) \times \bU_{c\s}(\CR) \to \bU(\CR)$ is the natural projection.
\end{corollary}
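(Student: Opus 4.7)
The plan is to reduce the corollary to the purely combinatorial Theorem \ref{cross} by showing that a single application of $\Xi_{\dc\s}$ shrinks the support of the first coordinate according to the operator $\Cross_{c\s}$. For $x \in \bU(\CR)$, let $\supp(x) \subseteq \Phi^+$ denote the set of positive roots whose $\bU_\g$-factor (in a fixed ordered product decomposition of $x$) is nontrivial. The one-step claim to establish is: for $(x', y') = \Xi_{\dc\s}(x, y)$, one has $\supp(x') \subseteq \Cross_{c\s}(\supp(x))$.

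To prove this one-step claim, I would right-multiply the defining identity $x' \dc\s y' = \dc\s y x$ by $(\dc\s)^{-1}$ to get $x' \tilde y' = \tilde y \tilde x$, where $\tilde y = \dc\s y (\dc\s)^{-1}$ and $\tilde y' = \dc\s y' (\dc\s)^{-1}$ both lie in $\bU^-$ (since $y, y' \in \bU_{c\s}$ and $c\s(\Phi_{c\s}^+) \subseteq \Phi^-$), while $\tilde x = \dc\s x (\dc\s)^{-1}$ is a product of $\bU_{c\s(\g)}$-factors for $\g \in \supp(x)$. Using that $\Phi^+ \cap c\s(\Phi^+)$ and $\Phi^- \cap c\s(\Phi^+)$ are each closed root subsystems within $c\s(\Phi^+)$, one decomposes $\tilde x = \tilde x_+ \tilde x_-$ uniquely with $\tilde x_+ \in \bU$ and $\tilde x_- \in \bU^-$. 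Repeatedly applying the Chevalley commutator formula to move the $\bU^-$-factor $\tilde y$ past $\tilde x_+$ and to separate positive from negative root components, the positive factors accumulate into $x'$ while the negative ones pass into $\tilde y'$. The critical observation is that any factor arising purely from $\tilde y$ stays in $\bU^-$, because $c\s(\Phi_{c\s}^+) \subseteq \Phi^-$ is closed under addition; hence every positive root of $x'$ must be produced by a commutator involving at least one $\bU_{c\s(\g)}$-factor with $\g \in \supp(x)$. A careful tracking then shows each $\rho \in \supp(x')$ has the form $\rho = c\s(i\g + \sum_{\d \in \Phi_{c\s}^+} j_\d \d) \in \Phi^+$ for some $\g \in \supp(x)$ with $i \ge 1$ and $j_\d \ge 0$, i.e., $\rho \in \Cross_{c\s}(\g) \subseteq \Cross_{c\s}(\supp(x))$.

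Given the one-step claim, the corollary follows by iteration: $\supp(\pr_1 \Xi_{\dc\s}^d(x, y)) \subseteq \Cross_{c\s}^d(\supp(x)) \subseteq \Cross_{c\s}^d(\Phi^+)$, which by Theorem \ref{cross} is empty for all sufficiently large $d$; hence $\pr_1 \circ \Xi_{\dc\s}^d$ is trivial. The hard part will be the rigorous commutator bookkeeping in the one-step claim, particularly verifying that iterated nested commutators produced during the $\bU \cdot \bU^-$-reorganization remain confined to the cone $c\s(\BZ_{\ge 1}\g + \BZ_{\ge 0}\Phi_{c\s}^+)$ for some single $\g \in \supp(x)$ rather than generating roots requiring two distinct $\supp(x)$-contributions; one exploits that the $\bU^-$-factors available for cross-interaction come from $\tilde y$, whose roots lie in $c\s(\Phi_{c\s}^+)$, not in $c\s(\supp(x))$. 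The definition of $\Cross_{c\s}$ is designed precisely to match the output of this combinatorics, which is what makes the plan work.
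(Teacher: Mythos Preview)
Your overall strategy --- reduce the corollary to Theorem \ref{cross} by proving a one-step containment and then iterating --- is exactly the paper's strategy. The difference lies in how the one-step claim is organized and proved.

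The paper works entirely inside $\bU$ before ever applying $\dc\s$. For a single root $\a$ it sets $\Gamma = (\BZ_{\ge 1}\a + \BZ_{\ge 0}\Phi_{c\s}^+)\cap\Phi$, observes that $\Gamma \subseteq \Phi^+$ is closed under addition, and that $\Gamma$ is an ideal in $\Gamma\cup\Phi_{c\s}^+$; hence $yxy^{-1}\in\prod_{\g\in\Gamma}\bU_\g$ for $x\in\bU_\a$, $y\in\bU_{c\s}$. One then factors $\prod_{\g\in\Gamma}\bU_\g = \prod_{\g\in\Gamma\cap\Phi_{w_0c\s}^+}\bU_\g\cdot\prod_{\g\in\Gamma\cap\Phi_{c\s}^+}\bU_\g$ and applies $\dc\s$: the first factor lands in $\prod_{\g\in\Cross_{c\s}(\a)}\bU_\g$ and the second is absorbed into $\bU_{c\s}$. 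No $\bU/\bU^-$ separation is needed, and the closedness of $\Gamma$ replaces all of the iterated-commutator bookkeeping you anticipate. For general $x\in\bU_d$, the paper writes $x$ as a word in the generating root subgroups and feeds them through one at a time, using only that $\bU_{d+1}$ is a \emph{subgroup}.

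Your route --- conjugate by $\dc\s$ first and then perform a $\bU\cdot\bU^-$ reorganization --- is workable, and your key observation (every $\bU^-$-factor appearing, including those born from commutators, has root in $c\s(\Phi_{c\s}^+)$, since any $i\delta+j\g\in\Phi^+$ with $c\s(i\delta+j\g)\in\Phi^-$ forces $i\delta+j\g\in\Phi_{c\s}^+$) is correct. But note that your stated one-step claim $\supp(x')\subseteq\Cross_{c\s}(\supp(x))$ is sharper than what the paper proves: the paper only shows that $x\in\bU_d$ implies $x'\in\bU_{d+1}$, where $\bU_{d+1}$ is the subgroup \emph{generated} by the root subgroups for $\Cross_{c\s}^{d+1}(\Phi^+)$. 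Your worry about cross-terms between two distinct $\g_1,\g_2\in\supp(x)$ is exactly the gap between these two formulations, and the paper sidesteps it entirely by processing one root at a time rather than trying to confine each root of $x'$ to a single cone $\Cross_{c\s}(\g)$. If you keep your $\bU/\bU^-$ approach, the cleanest fix is to imitate this: prove the one-step claim for $x\in\bU_\a$ only, and then iterate over the letters of a word for $x$.
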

\begin{proof}
    For $d \in \BZ_{\ge 0}$ let $\bU_d \subseteq \bU$ be the subgroup generated by $\bU_\a$ for $\a \in \Cross_{c\s}^d(\Phi^+)$. By Theorem \ref{cross}, it suffices to show that \[\Xi_{\dc\s}(\bU_d(\CR) \times \bU_{c\s}(\CR)) \subseteq \bU_{d+1}(\CR) \times \bU_{c\s}(\CR),\] that is, $\dc\s \bU_{c\s}(\CR) \bU_\a(\CR) \subseteq \bU_{d+1}(\CR) \dc\s \bU_{c\s}(\CR)$ for $\a \in \Cross_{c\s}^d(\Phi^+)$. Let \[\G = \BZ_{\ge 1} \a + \BZ_{\ge 0}\Phi_{c\s}^+ \subseteq \Phi^+.\] Note that there is decomposition \[\prod_{\g \in \G} \bU_\g = \prod_{\g \in \G \cap \Phi_{w_0 c\s}^+} \bU_\g  \prod_{\g \in \G \cap \Phi_{c\s}^+} \bU_\g\] of unipotent subgroups of $\bG$. For $x \in \bU_\a(\CR)$ and $y \in \bU_{c\s}(\CR)$ we have \[y x y\i \in \prod_{\g \in \G} \bU_\g(\CR).\] Write $y x y\i= x' y'$, where $x' \in \prod_{\g \in \G \cap \Phi_{w_0 c\s}^+} \bU_\g(\CR)$ and $y' \in \prod_{\g \in \G \cap \Phi_{c\s}^+} \bU_\g(\CR)$. Thus \[\dc\s y x = {}^{\dc\s} (x') \dc\s y' y,\] where $y' y \in \bU_{c\s}(\CR)$ and \[{}^{\dc\s} (x') \in \prod_{\g \in \Phi^+ \cap {}^{c\s} \G} \bU_\g(\CR) = \prod_{\g \in \Cross_{c\s}(\a)} \bU_\g(\CR) \subseteq \bU_{d+1}(\CR)\] as desired.
\end{proof}

\section{Proof of Theorem \ref{st-cross}}
First we show that the main result is independent of the choice of minimal $\s$-Coxeter elements.

Recall that two element $w, w' \in W$ are equivalent by $\s$-cyclic shifts if there exists a sequence \[w=w_0, w_1, \dots, w_n = w'\] such that for each $1 \le i \le n$ the elements $w_{i-1}, w_i$ have the same length, and are $\s$-conjugate by a simple reflection.
\begin{lemma} \label{shift}
    Let $c'$ be another minimal $\s$-Coxeter element. Then Theorem \ref{st-cross} holds for $c$ if and only if it holds for $c'$.
\end{lemma}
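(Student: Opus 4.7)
The plan is to reduce the statement to a single elementary step, namely a $\sigma$-cyclic shift between $c$ and $c'$, and then to transfer Theorem \ref{st-cross} across such a shift by $\sigma$-conjugation with a lift of the simple reflection involved. By the standard fact on minimal length elements in $\sigma$-conjugacy classes of $W$ (due to He and Nie) that any two such elements are connected by a sequence of length-preserving $\sigma$-cyclic shifts $w \mapsto s^{-1} w \sigma(s)$ with $s$ a simple reflection, and since all minimal $\sigma$-Coxeter elements form a single such $\sigma$-conjugacy class, I may assume $c' = s^{-1} c \sigma(s)$ with $\ell(c') = \ell(c)$. After permuting the commuting factors in the reduced expression of $c$, I may further assume $s = s_{\a_1}$, so that $c' = s_{\a_2} \cdots s_{\a_r} s_{\sigma(\a_1)}$ with new representative set $\{\a_2, \ldots, \a_r, \sigma(\a_1)\}$.

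Fix $\ts = \ts_{\a_1}$ and set $\dc' := \ts^{-1} \dc \sigma(\ts)$. The identity $\ts^{-1} \varpi^\mu \ts = \varpi^{s_{\a_1}(\mu)}$ yields $\dc' = \varpi^{\mu'} \ts_{\a_2} \cdots \ts_{\a_r} \sigma(\ts_{\a_1})$ with $\mu' := s_{\a_1}(\mu)$, and this is a lift of $c'$ of the form demanded in Theorem \ref{st-cross}, lying in $\varpi^{\mu'} \bN(\CO_\brk)$. The $\sigma$-conjugation $\Phi_\ts: g \mapsto \ts^{-1} g \sigma(\ts)$ preserves every $\sigma$-conjugacy class $[b]$. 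Using the root-level bijection induced by $\sigma(s_{\a_1})$ between $\Phi^+ \cap c^{-1}(\Phi^-)$ and $\Phi^+ \cap c'^{-1}(\Phi^-)$, which follows from the relation $c' = s_{\a_1}^{-1} c \sigma(s_{\a_1})$, I verify
\[\Phi_\ts\bigl(\dc \, \bT(\CO_\brk) \bU_c(\brk)\bigr) = \dc' \, \bT(\CO_\brk) \bU_{c'}(\brk),\]
so $\Phi_\ts$ intertwines the left-hand sides of Theorem \ref{st-cross} for $c$ and $c'$.

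It remains to check the companion identity $\Phi_\ts\bigl(\dc \, \bT(\CO_\brk) H_{c, \mu, b}\bigr) = \dc' \, \bT(\CO_\brk) H_{c', \mu', b}$, for which I track the auxiliary cocharacter $\lambda$ of Lemma \ref{eq} across the shift: starting from $\mu - \nu(b) = \lambda - c\sigma(\lambda)$, I produce $\lambda' \in X_*(\bT)_\BQ$ with $\mu' - \nu(b) = \lambda' - c'\sigma(\lambda')$, and match the pairings $\langle \lambda', \b_i'\rangle$ with $\langle \lambda, \b_i\rangle$ under the induced bijection of roots. The main obstacle is this bookkeeping: because $\nu(b)$ is not fixed by $s_{\a_1}$ in general, the naive choice $\lambda' = s_{\a_1}(\lambda)$ fails, and a correction term in $\BQ\Phi^\vee$ must be introduced. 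Verifying that this correction produces exactly the integer shifts needed to intertwine the two $H$-subgroups modulo $\bT(\CO_\brk)$, while respecting the split between the indices in $J_{\nu(b)}$ (where pairings are $\CO_\brk^\times$-valued) and its complement (where they are integer-valued), is the technical crux.
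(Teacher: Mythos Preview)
Your overall plan---reduce to a single $\sigma$-cyclic shift and transfer the statement by $\sigma$-conjugation---matches the paper's, but the specific map you propose does not work. The fixed $\sigma$-conjugation $\Phi_{\ts}: g \mapsto \ts^{-1} g \sigma(\ts)$ does \emph{not} send $\dc\,\bT(\COk)\bU_c(\brk)$ to $\dc'\,\bT(\COk)\bU_{c'}(\brk)$. Indeed, $\Phi_{\ts}(\dc t u) = \dc'\cdot \sigma(\ts)^{-1} t u\, \sigma(\ts)$, so one would need $s_{\sigma(\a_1)}$ to carry $\Phi^+ \cap c^{-1}(\Phi^-)$ onto $\Phi^+ \cap c'^{-1}(\Phi^-)$. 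This fails already for $\bG$ of type $A_2$ with $\sigma=\id$: for $c=s_1 s_2$ one has $\Phi^+\cap c^{-1}(\Phi^-)=\{\a_2,\a_1+\a_2\}$, for $c'=s_2 s_1$ one has $\Phi^+\cap c'^{-1}(\Phi^-)=\{\a_1,\a_1+\a_2\}$, while $s_1$ sends $\{\a_2,\a_1+\a_2\}$ to itself. So the asserted ``root-level bijection induced by $\sigma(s_{\a_1})$'' does not exist, and your intertwining of the left-hand sides breaks down.

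The paper repairs this by using an \emph{element-dependent} $\sigma$-conjugation instead: writing each point of $\dc\,\bT(\COk)\bU_c(\brk)$ in the product form $\ts_{\a_1} u_1\, z\, \ts_{\a_2} u_2 \cdots \ts_{\a_r} u_r$, the map $\psi$ $\sigma$-conjugates by the \emph{entire first block} $\ts_{\a_1} u_1$, yielding $z\,\ts_{\a_2} u_2 \cdots \ts_{\a_r} u_r\,\sigma(\ts_{\a_1} u_1)$. This cyclic shift lands in $\dc'\,\bT(\COk)\bU_{c'}(\brk)$ by construction, and still preserves $[b]$ because it is a $\sigma$-conjugation. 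On the $H$-side, the paper avoids tracking $\l$ altogether: since by Lemma~\ref{eq}(2) the sets $H_{\mu,b,\a_i}$ depend only on the integers $\<\mu-\nu(b),\o_i\>$, one checks directly that $\<s_{\a_1}(\mu)-\nu(b),\o_i\>$ differs from $\<\mu-\nu(b),\o_i\>$ by $-\<\mu,\a_1\>$ when $i=1$ and by $0$ otherwise, and the product-form computation $\psi(\dc\,\bT(\COk)H_{c,\mu,b}) = \dc'\,\bT(\COk)H_{c',s_{\a_1}(\mu),b}$ follows. Your proposed correction to $\l$ is unnecessary once one works with the product form and the weights $\o_i$ directly.
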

\begin{proof}
    Note that all minimal $\s$-Coxeter elements are equivalent by $\s$-cyclic shifts. So we can assume that $c' = s_{\a_1} c \s(s_{\a_1})$. Suppose that $\dc \in \varpi^\mu \bN(\COk)$ and let $\dc' = \tilde s_{\a_1}\i \dc \s(\tilde s_{\a_1}) \in \varpi^{s_{\a_1}(\mu)} \bN(\COk)$. Note that there are natural isomorphisms \begin{align*}\dc \bT(\COk) \bU_c(\brk) &\cong \varpi^\mu \bT(\COk) \times \tilde s_{\a_1} \bU_{\a_1}(\brk) \times \cdots \times \tilde s_{\a_r} \bU_{\a_r}(\brk) \\ &\cong \tilde s_{\a_1} \bU_{\a_1}(\brk) \times \varpi^{s_{\a_1}(\mu)} \bT(\COk) \times \tilde s_{\a_2} \bU_{\a_2}(\brk) \times \cdots \times \tilde s_{\a_r} \bU_{\a_r}(\brk). \end{align*} For $z \in \varpi^{s_{\a_1}(\mu)} \bT(\COk)$ and $u_i \in \bU_{\a_i}(\brk)$ the map \[\tilde s_{\a_1} u_1 z \tilde s_{\a_2} u_2  \cdots \tilde s_{\a_r} u_r \mapsto z \tilde s_{\a_2} u_2 \cdots \tilde s_{\a_r} u_r {}^\s(\tilde s_{\a_1} u_1)\] induces a bijection \[\psi: \dc \bT(\COk) \bU_c(\brk) \cong \dc' \bT(\COk) \bU_{c'}(\brk).\] Thus $\psi([b] \cap \dc \bT(\COk) \bU_c(\brk)) = [b] \cap \dc' \bT(\COk) \bU_{c'}(\brk)$. Now the statement follows from the following equality \begin{align*}
        & \quad\ \psi(\dc \bT(\COk) H_{c, \mu, b}) \\
        &= \psi(\varpi^\mu \bT(\COk) \tilde s_{\a_1} H_{\mu, b, \a_1} \tilde s_{\a_2} H_{\mu, b, \a_2} \cdots \tilde s_{\a_r} H_{\mu, b, \a_r}) \\
        &= \varpi^{s_{\a_1}(\mu)} \bT(\COk) \tilde s_{\a_2} H_{\mu, b, \a_2} \cdots \tilde s_{\a_r} H_{\mu, b, \a_r} \s(\tilde \a_1) {}^{\s \varpi^{s_{\a_1}(\mu)}}H_{\mu, b, \a_1} \\
        &= \varpi^{s_{\a_1}(\mu)} \bT(\COk) \tilde s_{\a_2} H_{s_{\a_1}(\mu), b, \a_2} \cdots \tilde s_{\a_r} H_{s_{\a_1}(\mu), b, \a_r} \tilde s_{\s(\a_1)} H_{s_{\a_1}(\mu), b, \s(\a_1)} \\
        &= \dc' \bT(\COk) H_{c', s_{\a_1}(\mu), b}, \end{align*} where the third equality follows from that \begin{align*} \<s_{\a_1}(\mu) - \nu(b), \o_i\> = \begin{cases} \<\mu - \nu(b), \o_i\>  - \<\mu, \a_i\>, &\text{ if } i = 1; \\ \<\mu -\nu(b), \o_i\>, &\text{ otherwise.} \end{cases} \end{align*} The proof is finished.
\end{proof}

\begin{lemma} \label{preserve}
     Let $\mu, b, \l$ be as in Theorem \ref{st-cross}. Let $\CR$ be as in \S \ref{R-sec} associated to the pair $(\l, J_{\nu(b)})$. Then the map $\Xi_{\dc\s}$ in \S \ref{cross-subsec} induces endomorphism of (1) $\bU(\CR)^{\ge \l} \times \bU_{c\s}(\CR)^{\ge \l}$ and (2) $\bU(\CR)^{>\l} \times h \bU_{c\s}(\CR)^{>\l}$ for $h \in \bU_{c\s}(\CR)^{\ge \l}$.
\end{lemma}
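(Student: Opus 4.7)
The plan is to make the definition of $\Xi_{\dc\s}$ completely explicit via the disjoint decomposition $\Phi^+ = \Phi_{w_0 c\s}^+ \sqcup \Phi_{c\s}^+$ --- where $\Phi_{w_0 c\s}^+ = \Phi^+ \cap (c\s)\i(\Phi^+)$ --- and the resulting factorization $\bU = \bU_{w_0 c\s} \cdot \bU_{c\s}$. Given $(x,y) \in \bU(\CR) \times \bU_{c\s}(\CR)$, I would first rewrite $\dc\s \, y \, x = \dc\s \cdot (yxy\i) \cdot y$, factor $yxy\i = u_1 u_2$ uniquely with $u_1 \in \bU_{w_0 c\s}(\CR)$ and $u_2 \in \bU_{c\s}(\CR)$, and push $u_1$ across $\dc\s$. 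Since $c\s(\g) \in \Phi^+$ for every root $\g$ appearing in $u_1$, the element ${}^{\dc\s}(u_1)$ indeed lies in $\bU(\CR)$, giving the formula
\[\Xi_{\dc\s}(x,y) \;=\; \bigl({}^{\dc\s}(u_1),\ u_2 y\bigr).\]

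With this explicit description the verification reduces to a handful of grading checks. The commutator identity $[\bU_\a(f), \bU_\b(g)] \subseteq \prod_{i,j \ge 1} \bU_{i\a+j\b}(\ldots)$ combined with Lemma \ref{CR}(1) shows that both $\bU(\CR)^{\ge\l}$ and $\bU(\CR)^{>\l}$ are subgroups of $\bU(\CR)$, closed under conjugation by themselves, and that the factorization $\bU = \bU_{w_0 c\s} \cdot \bU_{c\s}$ respects the grading --- any rearrangement of root factors needed to pull the $\Phi_{w_0 c\s}^+$-part to the left only produces commutator terms whose valuations fit into the required graded pieces. For part (2), writing $y = h y_0$ with $y_0 \in \bU_{c\s}(\CR)^{>\l}$, I would show $yxy\i \in \bU(\CR)^{>\l}$ as follows: $y_0 x y_0\i \in \bU(\CR)^{>\l}$ by conjugation inside the subgroup, and conjugation by $h \in \bU_{c\s}(\CR)^{\ge\l}$ preserves $\bU(\CR)^{>\l}$ because the commutator $[h, u]$ with $u \in \bU(\CR)^{>\l}$ has strict valuation inherited from $u$. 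Finally, the commutativity of $\bU_{c\s}$ (Lemma \ref{commute}) yields $u_2 y = h (u_2 y_0) \in h \bU_{c\s}(\CR)^{>\l}$, as demanded by the coset structure.

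The only substantive step --- and the heart of the argument --- is to show that ${}^{\dc\s}$ maps $\bU_{w_0 c\s}(\CR)^{\ge\l}$ into $\bU(\CR)^{\ge\l}$, and similarly with $>\l$. For $\g \in \Phi_{w_0 c\s}^+$ and $f \in \CR^{\ge\<\l,\g\>}$, a direct computation using $\dc \in \varpi^\mu \bN(\COk)$ yields
\[{}^{\dc\s}\bigl(\bU_\g(f)\bigr) \;=\; \bU_{c\s(\g)}\bigl(\alpha\,\varpi^{\<\mu,\,c\s(\g)\>}\,\s(f)\bigr)\]
for some unit $\alpha \in \CO_\brk^\times$, so the valuation of the new coefficient is at least $\<\l,\g\> + \<\mu, c\s(\g)\>$. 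By Lemma \ref{eq}(1) this equals $\<\l, c\s(\g)\> + \<\nu(b), c\s(\g)\>$, so the required bound $\ge \<\l, c\s(\g)\>$ reduces exactly to $\<\nu(b), c\s(\g)\> \ge 0$, which holds because $c\s(\g) \in \Phi^+$ and $\nu(b)$ is dominant. The main obstacle is the strict version needed for (2): since $\<\nu(b), c\s(\g)\>$ may well vanish, strictness cannot come from $\nu(b)$ and must be supplied by the input $f \in \CR^{>\<\l,\g\>}$; this propagates correctly because $\s$ preserves $\varpi$-valuations in $\CR$, so $\s(f) \in \CR^{>\<\l,\g\>}$ and the non-strict comparison on the $\<\mu,\cdot\>$-contribution still lets the strict inequality pass through.
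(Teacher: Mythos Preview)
Your proposal is correct and follows essentially the same route as the paper. The paper compresses your first two paragraphs into the single sentence ``By the proof of Corollary \ref{conj} and that $\bU_{c\s}(\CR)^{\ge \l}$ normalizes $\bU_{c\s}(\CR)^{>\l}$, it suffices to show ${}^{\dc\s} \bU_{w_0 c\s}(\CR)^{\ge\l} \subseteq \bU(\CR)^{\ge\l}$ (and the strict analogue),'' and then carries out exactly your key computation via Lemma \ref{eq}(1) and the dominance of $\nu(b)$; your treatment of the coset structure in (2) via the commutativity of $\bU_{c\s}$ is a slightly sharper version of the paper's normalization remark.
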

\begin{proof}
    By the proof of Corollary \ref{conj} and that $\bU_{c\s}(\CR)^{\ge \l}$ normalizes $\bU_{c\s}(\CR)^{>\l}$, it suffices to show that \[{}^{\dc\s} \bU_{w_0 c\s}(\CR)^{>\l} \subseteq \bU(\CR)^{>\l} \text{ and } {}^{\dc\s} \bU_{w_0 c\s}(\CR)^{\ge \l} \subseteq \bU(\CR)^{\ge \l},\] where $w_0$ is the longest element of $W$. We only show the first inclusion. Indeed, for $\g \in \Phi_{w_0 c \s}^+$, that is, $c\s(\g) \in \Phi^+$, we have \begin{align*}{}^{\dc\s} \bU_\g(\CR^{> \<\l, \g\>}) &= \bU_{c\s(\g)}(\CR^{>\<\mu, c\s(\g)\> + \<\l, \g\>}) = \bU_{c\s(\g)}(\CR^{>\<\l, c\s(\g)\> + \<\nu(b), c\s(\g)\>}) \\ &\subseteq \bU_{c\s(\g)}(\CR^{>\<\l, c\s(\g)\>}),\end{align*} where the second equality follows from Lemma \ref{eq} (1), and the inclusion follows from that $\<\nu(b), c\s(\g)\> \ge 0$ since $c\s(\g) \in \Phi^+$. So the statement follows.
\end{proof}

Let $A, B \subseteq \bG(\brk)$ and let $C$ be a subgroup of $\bG(\brk)$. We write $A\s \sim_{C} B\s$ if $C \cdot_{\s} A = C \cdot_{\s} B$.
\begin{proof}[Proof Proposition \ref{st-cross}]
     As $\mu - \nu(b) = \l - c\s(\l)$ and $\l \in X_*(\bT)$ it follows that $\nu(b) \in X_*(\bT)$. Thus we may assume $b = \varpi^\eta$ for some $\eta \in X_*(\bT)$ (for example $\eta = \nu(b)$) such that the $\s$-average $\eta^\diamond$ of $\eta$ is dominant. Set $\nu = \nu(b) = \eta^\diamond$ and $J = J_{\nu(b)}$. By Lemma \ref{shift} we may assume $\<\nu, \a_1\> \neq 0$ if $\nu$ is non-central.

     By Lemma \ref{unique} we may assume further that $\mu - \eta = \sum_{i=1}^r n_i \a_i^\vee$ with $n_i \in \BZ$ for $1 \le i \le r$. By Proposition \ref{red}, it is equivalent to show that \[\varpi^\eta \bT(\COk) K_{\mu - \eta} \s \sim_{\bU(\brk)} \dc\s \bT(\COk) H_{c\s, J}^\l.\] We argue by induction on $r$. If $r = 0$, the statement is trivial. Suppose $r \ge 1$. Let $\bM$ be the standard Levi subgroup generated by $\bT$ and the root subgroup $\bU_{\pm \a}$ for $\a \in \Pi - C_1$, where $C_1 \subseteq \Pi$ is the $\s$-orbit of $\a_1$. Let $c_\bM = s_{\a_1} c$ and let $\dc_{\bM} \in \varpi^{\mu - n_1\a_1^\vee} \bN(\COk)$ be a lift of $c_\bM$. By Lemma \ref{eq}, \[\<\l, \b_i\> = \<\mu - \nu, \o_i\> = \<\mu - \eta^\diamond, \o_i\> = \<\mu-\eta, \o_i\> = n_i \text{ for } 1 \le i \le r.\] Hence (using that $\mu - \nu = \l - c\s(\l)$) we have \[\mu-n_1\a^\vee - \nu = \l - c_\bM\s(\l).\] By induction hypothesis and Proposition \ref{red} (where we take $(\bG, \dc, b) = (\bM, \dc_\bM, \varpi^\eta)$, and note that $\k_\bM(b) = \k_\bM(\dc_\bM)$ and $\nu_\bM(b) = \nu$), \[\tag{a} \varpi^\eta  \bT(\COk) K_{\mu-n_1\a_1^\vee - \eta}^{\bM} \s \sim_{\bU_\bM(\brk)} \dc_\bM \s \bT(\COk) H_{c_{\bM}\s, J^\bM}^{\l, \bM},\] where $\bU_\bM = \bU \cap \bM$, $J^\bM = J - \{1\}$, and $K_{\mu-n_1\a_1^\vee - \eta}^{\bM}$, $ H_{c_{\bM}\s, J^\bM}^{\l, \bM}$ are defined similarly for $\bM$. Note that \[K_{\mu-\eta} = \bU_{-\a_1}(\varpi^{m_1} \CO_\brk^\times) K_{\mu-n_1\a_1^\vee-\eta}^\bM,\] where $m_1 = -n_1 - \sum_{i=2}^r n_i \<\a_i^\vee, \a_1\>$. Thus \begin{align*}
        &\quad\ \varpi^\eta \bT(\COk) K_{\mu-\eta} \s \\
        &=\varpi^\eta \bT(\COk) \bU_{-\a_1}(\varpi^{m_1} \CO_\brk^\times) K_{\mu-n_1\a_1^\vee-\eta}^\bM \s \\
        &=\bU_{-\a_1}(\varpi^{m_1 - \<\eta, \a_1\>} \CO_\brk^\times) \varpi^\eta \bT(\COk) K_{\mu - n_1\a_1^\vee-\eta}^\bM \s \\
        &\sim_{\bU_\bM(\brk)} \bU_{-\a_1}(\varpi^{m_1 - \<\eta, \a_1\>} \CO_\brk^\times) \dc_\bM \s \bT(\COk) H_{c_\bM\s, J^\bM}^{\l, \bM} \\
        &=\bU_{-\a_1}(\varpi^{m_1 - \<\eta, \a_1\>}  \CO_\brk^\times) \varpi^{\mu - n_1\a_1^\vee} \bT(\COk) \tilde s_{\a_2} \cdots \tilde s_{\a_r} \s H_{c_\bM\s, J^\bM}^{\l, \bM} \\
        &=\varpi^{\mu - n_1\a_1^\vee} \bT(\CO) \bU_{-\a_1}(\varpi^{-n_1} \CO_\brk^\times) \tilde s_{\a_2} \cdots \tilde s_{\a_r} \s H_{c_\bM\s, J^\bM}^{\l, \bM} \\
        &=\cup_{y \in \CO_\brk^\times} \varpi^{\mu - n_1\a_1^\vee} \bT(\CO) (\varpi^{-n_1} y\i)^{-\a_1^\vee}  \bU_{\a_1}(\varpi^{-n_1} y\i) \tilde s_{\a_1} \bU_{\a_1}(\varpi^{n_1} y) \tilde s_{\a_2} \cdots \tilde s_{\a_r} \s H_{c_\bM\s, J^\bM}^{\l, \bM} \\
        &=\cup_{y \in \CO_\brk^\times} \varpi^{\mu} \bT(\CO) \bU_{\a_1}(\varpi^{-n_1} y\i) \tilde s_{\a_1} \bU_{\a_1}(\varpi^{n_1} y) \tilde s_{\a_2} \cdots \tilde s_{\a_r} \s  H_{c_\bM\s, J^\bM}^{\l, \bM} \\
        &= \cup_{z \in \bT(\COk)} \cup_{y \in \CO_\brk^\times} \bU_{\a_1}(d_z \varpi^{\<\mu, \a_1\> - n_1} y\i) \varpi^\mu z \tilde s_{\a_1} \cdots \tilde s_{\a_r} \s \bU_{\b_1}(\varpi^{n_1} y) H_{c_\bM\s, J^\bM}^{\l, \bM},
    \end{align*} where the relation $\sim_{\bU_M(\brk)}$ follows from (a) and the observation that $\bU_\bM$ commutes with $\bU_{-\a_1}$, the fourth equality follows from the equality $\mu - \eta = \sum_{i=1}^r n_i \a_i^\vee$, and $d_z \in \CO_\brk^\times$ is certain constant depending on $z \in \bT(\COk)$. Therefore, it suffices to show
    \[ \tag{$\ast$} \dc\s H_{c\s, J}^\l \sim_{\bU(\brk)} \cup_{y \in \CO_\brk^\times} \bU_{\a_1}(d \varpi^{\<\mu, \a_1\> - n_1} y\i) \dc\s \bU_{\b_1}(\varpi^{n_1} y) H_{c_\bM\s, J^\bM}^{\l, \bM},\]
    where $d \in \CO_\brk^\times$ and $\dc \in \varpi^\mu \bN(\COk)$ is a lift of $c$.

    \

    Set \[V_{c\s, J}^\l = \bU_{\b_1}(\varpi^{n_1} \CO_\brk^\times) H_{c_\bM\s, J^\bM}^{\l, \bM} \subseteq H_{c\s, J}^\l.\] Let $\CR$ be as in \S \ref{R-sec} associated to the pair $(J, \l)$. As $n_1 = \<\l, \b_1\>$, the right hand side of ($\ast$) is the image of map \[h^0 = (h_1^0, h_2^0) : V_{c\s, J}^\l \to \bU(\brk) \times \bU_{c\s}(\brk),\] where $h_1^0 = \bU_{\a_1}(d \varpi^{\<\mu, \a_1\>} f_1\i)$, $h_2^0 = \prod_{i=1}^r \bU_{\b_i}(f_i)$, and \[f_i = \varpi^{\<\l, \b_i\>} \sum_{k=0}^\infty X_i^l \varpi^l \in \CR^{\ge \<\l, \b_i\>}.\] Let $h^n = (h_1^n, h_2^n) = \Xi_{\dc\s}^n \circ h^0$ for $n \in \BZ$. By Corollary \ref{conj}, there exists $N \gg 0$ such that $h_1^N$ is trivial. Hence ($\ast$) is equivalent to that the image $\Im(h_2^N)$ of the map $h_2^N: V_{c\s, J}^\l \to \bU_{c\s}(\brk)$ equals $H_{c\s, J}^\l$.

    Note that $f_1\i \in \CR^{\ge -\<\l, \b_1\>}$ by Lemma \ref{CR} (3). Moreover, It follows from Lemma \ref{eq} (3) that \[\<\mu, \a_1\> - \<\l, \b_1\> - \<\l, \a_1\> = \<\nu, \a_1\> \ge 0.\] So $h_1^0 \in \bU(\CR)^{\ge \l}$, and $h_1^0 \in \bU(\CR)^{>\l}$ if $\<\nu, \a_1\> > 0$.

    \

    First we assume that $\<\nu, \a_1\> \neq 0$. In this case, $h^0 \in \bU(\CR)^{>\l} \times h_2^0 \bU_{c\s}(\CR)^{> \l}$ and $V_{c\s, J}^\l = H_{c\s, J}^\l$. It follows from Lemma \ref{preserve} (2) that $h_2^N \in h_2^0 \bU_{c\s}^{>\l}$. By Lemma \ref{surj} we have $\Im(h_2^N) = H_{c\s, J}^\l$ as desired.

    Now we assume that $\<\nu, \a_1\> = 0$. Then $\nu$ is central by assumption. It follows from Lemma \ref{preserve} (1) that $h_2^N \in \bU_{c\s}(\CR)^{\ge \l}$. Hence \[\Im(h_2^N) \subseteq {}^{\varpi^\l} \bU_{c\s}(\COk) = H_{c\s, J}^\l.\] By Proposition \ref{red} this means that \[[b] \cap \dc \bU_c(\brk) \subseteq \dc H_{c, \mu, b}.\] On the other hand, as $\nu$ is central and $\mu + c\s(\l) = \l + \nu$, we see that $\dc \s$ fixes the Moy-Prasard group $H$ generated by $\bT(\COk)$ and $\bU_\a(\varpi^{\<\l, \a\>} \COk)$ for $\a \in \Phi$. Since ${}^{\s\i} H_{c, \mu, b} = H_{c\s, J}^\l \subseteq H$, it follows from Lang's theorem that \[\dc H_{c, \mu, b} \subseteq [\dc] \cap \dc \bU_c(\brk) = [b] \cap \dc \bU_c(\brk).\] So $[b] \cap \dc \bU_c(\brk) = \dc H_{c, \mu, b}$ and the theorem also holds.
\end{proof}

\section{Applications}
Let $\mu \in X_*(\bT)$ and let $\dc \in \varpi^\mu \bN(\COk)$ be a lift of $c$.

\subsection{} \label{subsec:closure}
First we recall the dominance order $\leq$ on $B(\bG)$. Namely, $[b'] \le [b] \in B(\bG)$ if $\k(b) = \k(b')$ and $\nu(b) - \nu(b') \in \sum_{\a \in \Pi} \BR_{\ge 0} \a^\vee$.

Now we prove Theorem \ref{closure}. By Theorem \ref{main} we have \[ \tag{a} \overline{[b] \cap \dc\bU_c(\brk)} = \varpi^\mu \prod_{i=1}^r \ts_{\a_i} \bU_{\a_i}(\varpi^{\lceil\<\mu - \nu(b), \o_i\> \rceil } \COk).\] Then it follows that $\overline{[b] \cap \dc \bU(\brk)} - ([b] \cap \dc \bU(\brk))$ is pure of codimension one having $\sharp J_{\nu(b)}$ irreducible components.

Let $[b'] \le [b]$. Then $\<\mu-\nu(b), \o_i\> \le \<\mu-\nu(b'), \o_i\>$ for $1 \le i \le r$. By (a) we have \[\overline{[b'] \cap \dc \bU(\brk)} \subseteq \overline{[b] \cap \dc \bU(\brk)}.\] On the other hand, Let $[b''] \in B(\bG)$ which intersects $\overline{[b] \cap \dc \bU(\brk)}$. Then $\k(b'') = \k(b)$. Moreover, it follows from Theorem \ref{main} that \[ \tag{a} \<\mu-\nu(b), \o_i\> \le \<\mu-\nu(b''), \o_i\> \text{ for } i \in J_{\nu(b'')}.\] Let $\Pi_1, \Pi_2 \subseteq \Pi$ be $\s$-stable subsets (which may be empty) such that $\Pi = \Pi_1 \sqcup \Pi_2$ and \[\nu(b) - \nu(b'') = v_1 - v_2,\] where $v_1 \in \sum_{\a \in \Pi_1} \BR_{\ge 0}\a^\vee$ and $v_2 \in \sum_{\a \in \Pi_2} \BR_{> 0}\a^\vee$. It follows from (a) that $\<\nu(b''), \a\> = 0$ for $\a \in \Pi_2$. Let $\rho_2$ be the half sum of roots in $\Phi^+$ spanned by $\Pi_2$. Then $\<\nu(b''), \rho_2\> = 0$ and $\<\a^\vee, \rho_2\> > 0$ for $\a \in \Pi_2$. We have \[0 \le \<v_2, \rho_2) = \<v_1, \rho_2\> - \<\nu(b), \rho_2\> + \<\nu(b''), \rho_2\> \le 0.\] Thus $v_2 = 0$ and hence $[b''] \le [b]$ as desired.

\subsection{} Now we prove Theorem \ref{poset}. Let \[[b'] = [b_0] < [b_1] < \cdots < [b_n] = [b]\] be a maximal chain in the poset $(B(\bG), \leq)$. We claim that

(a) $[b_{i-1}] \cap \dc \bU(\brk)$ is of codimension one in $\overline{[b_i] \cap \dc \bU(\brk)}$ for $1\le i \le n$

By Theorem \ref{closure}, there is $[b_i'] < [b_i]$ such that $\overline {[b_i'] \cap \dc \bU(\brk)}$ is an irreducible component of $\overline{[b_i] \cap \dc \bU(\brk)} - ([b_i] \cap \dc \bU(\brk))$ containing $[b_{i-1}] \cap \dc \bU(\brk)$. In particular, $[b_i'] \cap \dc \bU(\brk)$ is of codimension one in $\overline{[b_i] \cap \dc \bU(\brk)}$, and $[b_{i-1}] \leq [b_i'] < [b_i]$. Hence $[b_i'] = [b_{i-1}]$ since the chain $[b_{i-1}] \leq [b_i]$ is maximal. So (a) is proved.

By (a) it follows that the length of any maximal chain between $[b']$ and $[b]$ equals the codimension of $[b'] \cap \dc \bU(\brk)$ in $\overline{[b] \cap \dc \bU(\brk)}$. So the poset $(B(\bG), \leq)$ is ranked, and in view of \S \ref{subsec:closure} the length function is given by \begin{align*}\leng([b], [b']) &= \codim ([b'] \cap \dc \bU(\brk), \overline{[b] \cap \dc \bU(\brk)}) \\ &=  \sum_{i=1}^r \lceil \<\mu-\nu(b'), \o_i\>\rceil - \lceil \<\mu-\nu(b), \o_i\>\rceil,\end{align*} as desired.

\subsection{} \label{subsec:identity}
For $\mu \in X_*(\bT)$ recall that \[B(\bG, \mu)_\irr = \{[b] \in B(\bG, \mu); \nu(\varpi^\mu)-\varpi(b) \in \sum_{\a \in \Pi} \BR_{>0} \a^\vee\}.\] First we show the following result.
\begin{lemma} \label{indec}
    Let $\mu \in X_*(\bT)$ and let $\dc \in \varpi^\mu \bN(\COk)$ be a lift of $c$. Suppose that $B(\bG, \mu)_\irr \neq \emptyset$. Then \[
    \dc \bU_c(\varpi \COk) = \bigsqcup_{[b] \in B(\bG, \mu)_\irr} [b] \cap \dc\bU_c(\brk).\]
\end{lemma}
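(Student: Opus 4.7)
My plan is to verify both inclusions using Theorem \ref{main}, the main obstacle being a structural claim that under $B(\bG, \mu)_\irr \ne \emptyset$ the integrality condition from $\bU_c(\varpi\COk)$ already forces the class of any point in $\dc\bU_c(\varpi\COk)$ to be Hodge-Newton irreducible. Throughout, set $n_i = \<\mu - \nu(b), \o_i\>$, and recall from Theorem \ref{main} that $H_{\mu, b, \a_i}$ is $\bU_{\a_i}(\varpi^{n_i}\CO_\brk^\times)$ when $i \in J_{\nu(b)}$ and $\bU_{\a_i}(\varpi^{\lceil n_i \rceil}\COk)$ otherwise.

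For the inclusion $\supseteq$, I would observe that $[b] \in B(\bG, \mu)_\irr$ gives $n_i > 0$ for every $i$ (pair $\nu(\varpi^\mu) - \nu(b) \in \sum_\a \BR_{>0}\a^\vee$ with the $\s$-invariant weight $\o_i$, and use integrality of $n_i$ for $i \in J_{\nu(b)}$ from Lemma \ref{eq}); hence $H_{\mu, b, \a_i} \subseteq \bU_{\a_i}(\varpi\COk)$ for all $i$, and passing through the product decomposition $\dc\bU_c(\brk) = \varpi^\mu\ts_{\a_1}\bU_{\a_1}(\brk)\cdots\ts_{\a_r}\bU_{\a_r}(\brk)$ (under which $\dc\bU_c(\varpi\COk)$ corresponds to the subset with each $u_i \in \bU_{\a_i}(\varpi\COk)$) yields $[b]\cap\dc\bU_c(\brk) \subseteq \dc\bU_c(\varpi\COk)$; disjointness of the union is immediate from the Newton stratification. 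For $\subseteq$, any $x \in \dc\bU_c(\varpi\COk)$ lies in $\varpi^\mu\CK$, so its class $[b]$ is in $B(\bG, \mu)$ by Theorem \ref{Mazur}. Writing $x = \varpi^\mu\ts_{\a_1}u_1\cdots\ts_{\a_r}u_r$ with $u_i \in \bU_{\a_i}(\varpi\COk)$ and placing each $u_i$ in $H_{\mu, b, \a_i}$ via Theorem \ref{main}, the condition for $i \in J_{\nu(b)}$ becomes non-emptiness of $\bU_{\a_i}(\varpi^{n_i}\CO_\brk^\times) \cap \bU_{\a_i}(\varpi\COk)$, forcing $n_i \ge 1$; thus $I \cap J_{\nu(b)} = \emptyset$, where $I = \{i : n_i = 0\}$.

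The remaining task is to show $I = \emptyset$, and this is the hard step where the hypothesis $B(\bG, \mu)_\irr \ne \emptyset$ enters. That hypothesis forces the basic class $[b_0]$ with $\k(b_0) = \k(\varpi^\mu)$ to lie in $B(\bG, \mu)_\irr$ (by dominance-minimality of $[b_0]$), so the coefficients $c_\a(b_0)$ in $\nu(\varpi^\mu) - \nu(b_0) = \sum_\a c_\a(b_0)\a^\vee$ are all strictly positive. Suppose for contradiction $I \ne \emptyset$ and let $\tilde I = \{\a \in \Pi : c_\a(b) = 0\} \subseteq \Pi$ be the $\s$-stable subset corresponding to $I$. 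For each $\a \in \tilde I$, the assumption $I \cap J_{\nu(b)} = \emptyset$ gives $\<\nu(b), \a\> = 0$, so the expansion $\<\nu(\varpi^\mu) - \nu(b), \a\> = \sum_\b c_\b(b)\<\b^\vee, \a\>$ has only non-positive summands (using $c_\a(b) = 0$ and non-positivity of off-diagonal Cartan integers) while its left-hand side equals $\<\nu(\varpi^\mu), \a\> \ge 0$; this forces $\<\nu(\varpi^\mu), \a\> = 0$ together with $c_\b(b) = 0$ for every $\b$ Dynkin-adjacent to $\a$, so $\tilde I$ is a union of connected components of the Dynkin diagram. Applying the identical expansion to $[b_0]$ (with $\<\nu(b_0), \a\> = 0$ from centrality) yields $\sum_{\b \in \tilde I} c_\b(b_0)\<\b^\vee, \a\> = 0$ for every $\a \in \tilde I$, i.e., $A_{\tilde I}^T c(b_0)|_{\tilde I} = 0$ for the Cartan matrix $A_{\tilde I}$ of the finite-type sub-root system on $\tilde I$. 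Invertibility of $A_{\tilde I}$ gives $c_\a(b_0) = 0$ on the non-empty set $\tilde I$, contradicting $[b_0] \in B(\bG, \mu)_\irr$.
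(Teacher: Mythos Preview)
Your proof is correct and follows essentially the same route as the paper's: both inclusions come directly from Theorem~\ref{main}, and for the hard inclusion you show that the vanishing set $\tilde I$ of the coefficients $c_\a(b)$ is a $\s$-stable union of connected components of the Dynkin diagram on which $\nu(\varpi^\mu)$ is central, which contradicts $B(\bG,\mu)_\irr \ne \emptyset$. The only differences are tactical---the paper obtains the component/centrality conclusion in one stroke by pairing with $\rho_2$ (the half-sum of positive roots supported on $\tilde I$) and then invokes the characterization ``$B(\bG,\mu)_\irr \ne \emptyset$ iff $\mu^\diamond$ is non-central on every $\s$-orbit of components'', whereas you argue simple-root-by-simple-root and close via the basic class and invertibility of the Cartan matrix on $\tilde I$, making your version slightly more self-contained; note also that the paper begins by reducing to $\mu$ dominant (so that $\nu(\varpi^\mu)=\mu^\diamond$ and hence $n_i$ agrees with the sum of $c_\a(b)$ over the $\s$-orbit of $\a_i$), a reduction you are using implicitly when you identify $I$ with $\tilde I$.
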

\begin{proof}
    We may assume $\mu$ is dominant. Then $\nu(\varpi^\mu)$ equal the $\s$-average $\mu^\diamond$ of $\mu$. Note that $B(\bG, \mu)_\irr \neq \emptyset$ if and only if $\mu^\diamond$ is non-central on each $\s$-orbit of connected components of the Dynkin diagram of $\Pi$.

    If $[b] \in B(\bG, \mu)_\irr$, by definition \[\<\mu-\nu(b), \o_i\> = \<\mu^\diamond - \nu(b), \o_i\> > 0 \text{ for } 1 \le i \le r.\] So it follows from Theorem \ref{main} that $[b] \cap \dc\bU_c(\brk) \subseteq \dc\bU(\varpi\COk)$.

    On the other hand, let $[b'] \in B(\bG, \mu) - B(\bG, \mu)_\irr$ which intersects $\dc\bU_c(\varpi\COk)$. Then there exist a proper subset $\s$-stable $\Pi_1 \subsetneq \Pi$ and $v \in \sum_{\a \in \Pi_1} \BR_{>0} \a^\vee$ such that $v = \mu^\diamond - \nu(b')$. By Theorem \ref{main} we have $\<\nu(b'), \a\> = 0$ for $\a \in \Pi - \Pi_1$. Let $\rho_2$ be the half sum of roots in $\Phi^+$ spanned by $\Pi - \Pi_1$. Then \[ 0 \le \<\mu^\diamond, \rho_2\> = \<\nu(b'), \rho_2\> + \<v, \rho_2\> = \<v, \rho_2\> \le 0.\] Thus $\<\mu^\diamond, \rho_2\> = \<v, \rho_2\> = 0$. This means that $\mu^\diamond$ is central on $\Pi - \Pi_1$, and $\<\a^\vee, \b\> = 0$ for $\a \in \Pi_1$ and $\b \in \Pi - \Pi_1$. Therefore, $\Pi-\Pi_1$ is a union of $\s$-orbits of connected components of the Dynkin diagram of $\Pi$. This contradicts that $\mu^\diamond$ is non-central on any $\s$-orbit of connected components of the Dynkin diagram of $\Pi$. The proof is finished.
\end{proof}

Now we are ready to prove Theorem \ref{identity}. We can assume  $\mu$ is dominant. Let $m$ be a sufficiently large integer such that $m > \<\mu, \o_i\>$ for $1 \le i \le r$. Suppose that the residue field of $k$ has $q$ elements. Then for any $[b] \in B(\bG, \mu)$ it follows from Theorem \ref{main} and Theorem \ref{poset} that \[\tag{a} \sharp ( ([b] \cap \dc \bU_c(k)) / \bU_c(\varpi^m \CO_k)) = (q-1)^{\sharp J_{\nu(b)}} q^{mr - \sharp J_{\nu(b)} - \leng([b], [\varpi^\mu])},\] where $\CO_k$ is the integer ring of $k$.

Suppose $B(\bG, \mu)_\irr \neq \emptyset$. Combining (a) with Lemma  \ref{indec} we have \begin{align*}
q^{(m-1)r} &= \sharp (\dc \bU_c(\varpi \CO_k) / \bU_c(\varpi^m \CO_k)) \\ &= \sum_{[b] \in B(\bG, \mu)_\irr} \sharp ( ([b] \cap \dc \bU_c(k)) / \bU_c(\varpi^m \CO_k)) \\ &= \sum_{[b] \in B(\bG, \mu)_\irr} (q-1)^{\sharp J_{\nu(b)}} q^{mr - \sharp J_{\nu(b)} - \leng([b], [\varpi^\mu])}.
\end{align*} Thus, $\sum_{[b] \in B(\bG, \mu)_\irr} (q-1)^{\sharp J_{\nu(b)}} q^{r - \sharp J_{\nu(b)} - \leng([b], [\varpi^\mu])} = 1$ and the second identity follows.

Recall that $\nu(\varpi^\mu)$ equals the $\s$-average $\mu^\diamond$ of $\mu$. Then $\<\mu - \nu(\varpi^\mu), \o_i\> = \<\mu - \mu^\diamond, \o_i\> = 0$ for $1 \le i \le r$. Therefore, \[\dc\bU_c(\COk) = \overline{[\varpi^\mu] \cap \dc\bU_c(\brk)} = \bigsqcup_{[b] \in B(\bG, \mu)} [b] \cap \dc\bU_c(\brk).\] Now the first identity follows in a similar way as above.

\end{document}